\def\en{\mathbb{N}}
\def\zet{\mathbb{Z}}
\def\ha{\mathbb H}
\newcommand{\twoline}[2]{\genfrac{}{}{0pt}{}{#1}{#2}}
\newtheorem{theorem}{Theorem}
\newtheorem{lemma}[theorem]{Lemma}
\newtheorem*{lemma*}{Lemma}
\newtheorem{corollary}[theorem]{Corollary}
\newtheorem*{corollary*}{Corollary}
\newtheorem*{definition*}{Definition}
\newtheorem*{theorem*}{Theorem}
\author[M. Paluszynski]{Maciej Paluszy\'nski}
\address{\small Instytut Matematyczny, Uniwersytet Wroc\l awski,
 pl. Grunwaldzki 2/4, \hbox{50-384} Wroc\-\l aw, Poland}
\email{mpal@math.uni.wroc.pl}
\author[J. Zienkiewicz]{Jacek Zienkiewicz}
\address{\small
 Instytut Matematyczny, Uniwersytet Wroc\l awski,
 pl. Grunwaldzki 2/4, \hbox{50-384} Wroc\-\l aw, Poland}
\email{zenek@math.uni.wroc.pl}
\title[ ]{On  maximal function of discrete rough truncated Hilbert transforms}
\begin{document}
\begin{abstract} We prove the weak type (1,1) estimate for maximal function of the truncated rough Hilbert transform
considered in \cite{PZ1}, \cite{PZ2}.
\end{abstract}

\keywords{Singular Integral Operators, Hilbert Transform}
\subjclass[2010]{42B25, 11P05}
\date{\today}
\thanks{The second named author was supported by a NCN grant}
 \maketitle
\baselineskip=18pt
\section{Introduction and Statement of the Results.}

Let $N$ be a dyadic integer, $1<\alpha\le 1+\frac1{1000}$ and let
\begin{equation}
\mu_N(x)=\sum_{N\le m\le 2N}\phi\Big(\frac mN\Big)\,\frac{\delta_0(x-[m^\alpha])}{m}
\end{equation}
where $\delta_0$ denotes the unit
point mass at zero, and $\phi\in C_c^\infty(1,2)$. For fixed positive $\theta<1$ and $\check\mu_N(x)=\mu_N(-x)$ we define
\begin{equation}\label{main_object_1}
H_Mf(x)=\sum_{M^\theta\le N\le M}(\mu_N-\check\mu_N)*f(x).
\end{equation}

The operators $H_M$ can be viewed as a discrete analog of rough singular integral operators, see eg. \cite{C1}. They have appeared in the $\ell^{1,\infty}$ invertibility
problem for discrete singular integral operators considered by the authors
in \cite{PZ1}, \cite{PZ2}.
In particular, it has been proved in \cite{PZ2} that $H_M$ is of weak type (1,1), uniformly
with $M$.

In the current paper we consider the following maximal variant of $H_M$
\begin{equation}\label{main_object}
H_M^*f(x)=\max_{B\le M}\Big|\sum_{M^\theta\le N\le B}(\mu_N-\check\mu_N)*f(x)\Big|.
\end{equation}
It is by now classical, that operators $H_M$ and $H_M^*$ are bounded on $\ell^p$, $p>1$. However, their behavior on $\ell^1$
still seems to be an open question. In this direction
we prove the following partial result.
\begin{theorem}\label{theorem}
The maximal Hilbert  operators $H_M^*f$  defined by \eqref{main_object}   are of weak type (1,1), uniformly in $M$.
\end{theorem}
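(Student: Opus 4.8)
The plan is to imitate the weak type $(1,1)$ proof of $H_M$ from \cite{PZ2}, the one new ingredient being the control of the supremum over the truncation level $B$. Fix $\lambda>0$, normalise $\|f\|_{\ell^1(\zet)}=1$, and perform a Calder\'on--Zygmund decomposition of $f$ at height $\lambda$: $f=g+b$, $b=\sum_Q b_Q$, with the $Q$ pairwise disjoint dyadic intervals, $\sum_Q|Q|\lesssim\lambda^{-1}$, $\sum_x b_Q(x)=0$, $\mathrm{supp}\,b_Q\subseteq Q$, $\|b_Q\|_{\ell^1}\lesssim\sum_{x\in Q}|f(x)|$, and $\|g\|_{\ell^\infty}\lesssim\lambda$, $\|g\|_{\ell^2}^2\lesssim\lambda$. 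Since $H_M^*$ is bounded on $\ell^2$ uniformly in $M$ (classical, as recalled above), Chebyshev handles the good part: $|\{H_M^*g>\lambda/2\}|\lesssim\lambda^{-2}\|g\|_{\ell^2}^2\lesssim\lambda^{-1}$. Setting $\Omega^*=\bigcup_Q 3Q$, so that $|\Omega^*|\lesssim\lambda^{-1}$, it remains to bound $|\{x\notin\Omega^*:H_M^*b(x)>\lambda/2\}|$. Writing $T_N=(\mu_N-\check\mu_N)*(\cdot)$ and $S_Bf=\sum_{M^\theta\le N\le B}T_Nf$, the task is therefore to estimate $\sup_B|S_Bb(x)|$ for $x\notin\Omega^*$, the supremum ranging over the at most $O(\log M)$ admissible dyadic integers $B$; the whole difficulty is that this supremum sits \emph{inside}, so one cannot simply sum the contributions of the individual scales $N$ over the complement of $\Omega^*$.

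Following \cite{PZ2}, the first move is the circle-method splitting $T_N=T_N'+T_N''$ into a minor-arc and a major-arc part: by Weyl's inequality the multiplier of $T_N'$ is $O(N^{-\delta})$ for some $\delta>0$, while the multiplier of $T_N''$ is, near each admissible rational $a/q$, a normalised Gauss sum times a smooth bump of frequency width $\sim N^{-\alpha}$ about $a/q$. For the minor-arc contribution $\sup_B|\sum_{M^\theta\le N\le B}T_N'b|$ the supremum costs nothing: exactly as for $H_M$ in \cite{PZ2}, the $\ell^2$ operator norms are summable with a power gain, $\sum_{N\ge M^\theta}\|T_N'\|_{\ell^2\to\ell^2}\lesssim M^{-\theta\delta}$, which dwarfs the $O(\log M)$ number of scales, so this part contributes $\lesssim\lambda^{-1}$ with room to spare.

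The core is the major-arc maximal operator $\sup_B|\sum_{M^\theta\le N\le B}T_N''b|$ off $\Omega^*$. Here the plan is to sum over the admissible rationals $a/q$ — the Gauss sums being summable in $q$ by the usual Gauss-sum/van der Corput bounds, with no loss in $M$ — and, for each fixed $q$, to recognise $\sum_{M^\theta\le N\le B}T_N''$ as a discretisation, at resolution one dyadic scale, of a modulated \emph{continuous} truncated Hilbert transform with truncation parameters $M^{\theta\alpha}$ and $B^\alpha$. Two facts then finish this part: (i) the maximal truncation of the continuous Hilbert transform is of weak type $(1,1)$ (Cotlar), a bound insensitive to the number of discrete scales, which transfers to $\zet$; and (ii) the discretisation error between $\sum_{M^\theta\le N\le B}T_N''$ and its continuous model is dominated, uniformly in $B$ and in $M$, by an auxiliary maximal operator assembled from smooth averages, whose weak type $(1,1)$ bound is elementary. (An alternative, purely discrete route: bound $\sup_B$ by the long-jump/Rademacher--Menshov inequality, $\sup_B|\sum_{M^\theta\le N\le B}T_N''b|\le\sum_{i=0}^{O(\log\log M)}\bigl(\sum_I|\sum_{N\in I}T_N''b|^2\bigr)^{1/2}$ over dyadic blocks $I$ of the index set of generation $i$, and use that major-arc pieces attached to well-separated frequency scales are almost orthogonal, producing a gain geometric in $i$ that again kills the logarithm; the generation-zero term is precisely the estimate $\sum_{x\notin\Omega^*}|H_Mb(x)|\lesssim1$ of \cite{PZ2}.)

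The step I expect to be the main obstacle is (ii): making the replacement of the arithmetic major-arc operator by a clean continuous model quantitative and uniform both in $M$ and in the truncation level $B$ — equivalently, establishing the almost-orthogonality of the major-arc blocks with a genuine gain in the block generation. This is exactly the point where the fine arithmetic information underlying \cite{PZ1,PZ2}, rather than soft $\ell^2$ theory, is indispensable; everything else is a routine, if somewhat lengthy, adaptation of the Calder\'on--Zygmund treatment of maximal singular integral operators.
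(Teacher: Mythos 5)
Your proposal does not follow the paper's route, and the two places where it departs are exactly where it breaks. First, the arithmetic skeleton is mismatched: for non-integer $1<\alpha\le 1+\frac1{1000}$ the sequence $[m^\alpha]$ has no rational structure, so there are no major arcs at rationals $a/q$, no Gauss sums, and Weyl's inequality is not the relevant tool; the gain away from the zero frequency comes from van der Corput estimates, and in \cite{UZ}, \cite{PZ2} and in this paper they are exploited on the kernel side, through the $TT^*$-type almost-orthogonality estimates for $K_{N_1,N_2}(x_1,x_2)=\sum_y\phi_J(y)\mu_{N_1}(x_1-y)\mu_{N_2}(x_2-y)$ of Lemma \ref{k_reg}, not through a multiplier-side arc decomposition. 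Moreover, your minor-arc step applies an $\ell^2\to\ell^2$ operator-norm gain directly to the bad function $b$, but $\|b\|_{\ell^2}$ is not controlled by $\lambda\|f\|_{\ell^1}$ (the Calder\'on--Zygmund bad part is unbounded); the paper must first slice $f$ by height into the pieces $f^{\lambda N}$, $f_{0,\infty}^{\lambda N}$, $f_0^{\frac{\lambda N}{A}}$ and use Lemma \ref{key_CZ} before any $\ell^2$ input is legitimate. Likewise the ``generation zero'' input you invoke, an $\ell^1$ bound for $H_Mb$ off the expanded cubes, is not what \cite{PZ2} proves and is in general unavailable for rough kernels; only the weak-type bound is.

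More importantly, the genuinely new difficulty --- the supremum over the truncation level $B$ at the $\ell^1$ level --- is precisely the step you leave open. Cotlar's inequality does not transfer: each $\mu_N$ is a sparse sum of point masses with no single-scale kernel regularity, and the ``discretisation error'' relative to a continuous truncated Hilbert transform is the entire rough part of the operator, so its weak type $(1,1)$ is not elementary --- it is the problem itself; you acknowledge this in your closing paragraph, which means the central estimate is missing. Your alternative Rademacher--Menshov route is closer in spirit to the paper, but as stated it costs a factor $\log M$, or requires an unproved geometric orthogonality gain across block generations. The paper's key device is different: on each interval $J$ with $|J|\sim M^{\theta-\epsilon}$ it introduces the conditional expectations $\beta_N=E_{\{J\}}F_{A,N}$ and a stopping-time sequence $N_j$ (Lemma \ref{max_sparse_est}); by Chebyshev (Lemma \ref{jmax_est}) only $O(A^3)$ truncation points are relevant for the height-$\frac{\lambda N}{A}$ piece, so Menshov's inequality (Lemma \ref{menshov}) costs only $\log A$, which is then absorbed by the $A^{-\delta}$ and $2^{-\delta(s_1+s_2)}$ gains supplied by the kernel and square-function estimates of Lemmas \ref{k_reg} and \ref{sqr_UZ}, together with the separate treatment of the error term $ER$ and the diagonal kernels $Err_{N,J}$. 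Without an analogue of this reduction in the number of admissible truncation levels (or a proved gain across blocks), the supremum in $B$ is not controlled, so the proposal has a real gap at its core.
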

\begin{corollary}\label{corollary}  The global maximal Hilbert transform:
\begin{equation*}
\ha^* f=\max_{A}\bigg|\sum_{\twoline{M\text{-dyadic}}{M\ge A}}\mu_M*f\bigg|.
\end{equation*}
is of the restricted weak type (1,1).
\end{corollary}

One of the motives to investigate this question is the natural difference in the analysis of singular
integral operators in the
discrete and the continuous settings (see eg. \cite{B}, \cite{BM}, \cite{C2}, \cite{IW}, \cite{LV}, \cite{MSW}).
The proof of Theorem \ref{theorem}
is a refined version of the argument from \cite{UZ} and also employs ideas from \cite{C1}, \cite{S}.
We refer to our previous work \cite{UZ}, \cite{PZ1}, \cite{PZ2} for a more complete list of references and  motivation.

\section{Definitions.}

Let $A,\,N$ be positive dyadic integers, $\lambda>0$.
For a set $\mathcal A$ we will denote its indicator function by $1_{\mathcal A}(x)$, and its cardinality by $|\mathcal A|$.
For a function $f$ we denote
$f^{\lambda N}(x)=f(x)\cdot1_{\{|f|<\lambda N\}}(x)$. We put $f^{\lambda N}_\infty(x)=f(x)-f^{\lambda N}(x)$ and
$f^{\frac{\lambda N}{A}}=f1_{\{|f|\sim \frac{\lambda N}{A}\}}$. This last notation may be misleading, but we will try to avoid any ambiguity in what follows.

Let $\{Q\}$ be any collection of disjoint intervals in $\mathbb Z$.  We define the conditional expectation operator

\begin{equation}\label{exp_oper}
E_{\{Q\}}f(x)=\sum_Q \frac{1_Q(x)}{|Q|}\sum_{y\in Q} f(y)
\end{equation}

In particular, we will consider the family of dyadic intervals $J$ of equal length $|J|\approx M^{\theta-\epsilon}$.
Its  expectation operator \eqref{exp_oper} will be denoted by  $E_{\{J\}}$.
We will write $E_{\mathcal A}f$ if the collection contains only one interval $\mathcal A$.

We will use the following variant of the Calder\'on-Zygmund  decomposition
\begin{lemma}\label{CZ_decomp}
Let $f\ge 0$, $f\in \ell^1$, and $\lambda>0$. Then there exists a family of disjoint dyadic intervals  $Q\subset \mathbb Z$ such that
\begin{equation}\label{CZ_decom}
\lambda \le \frac{1}{|Q|}\sum_{x\in Q} f(x)\le 2\lambda,
\end{equation}
and for any $x\notin (\bigcup_Q Q)$ we have $f(x)\le \lambda$.
\end{lemma}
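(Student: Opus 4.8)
The plan is to run the standard dyadic stopping-time (Calder\'on--Zygmund) argument, adapted to $\mathbb Z$. Call a dyadic interval $Q\subset\mathbb Z$ \emph{heavy} if $\frac1{|Q|}\sum_{x\in Q}f(x)\ge\lambda$, and let $\{Q\}$ be the collection of all \emph{maximal} heavy dyadic intervals (maximal with respect to inclusion). I will verify that this collection has the three required properties.

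First I would check that every heavy interval is contained in a maximal heavy one; this is where $f\in\ell^1$ enters. Fix a heavy $Q_0$ and consider its increasing chain of dyadic ancestors $Q_0\subset Q_1\subset Q_2\subset\cdots$, with $|Q_k|=2^k|Q_0|\to\infty$. Since $f\ge0$ and $f\in\ell^1$, $\sum_{x\in Q_k}f(x)\le\|f\|_{\ell^1}$ while $|Q_k|\to\infty$, so the averages $\frac1{|Q_k|}\sum_{x\in Q_k}f(x)\to0$; hence $Q_k$ is not heavy for $k$ large. Letting $j$ be the largest index with $Q_j$ heavy, any dyadic interval properly containing $Q_j$ is some ancestor $Q_k$ with $k>j$, hence not heavy, so $Q_j$ is maximal heavy and contains $Q_0$.

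Now the properties. Disjointness: two distinct dyadic intervals are either disjoint or nested, and two distinct maximal heavy intervals cannot be nested (the smaller one would fail to be maximal), so the intervals in $\{Q\}$ are disjoint. The lower bound in \eqref{CZ_decom} is immediate, since each $Q$ in the family is heavy. For the upper bound, let $\hat Q$ be the dyadic parent of $Q$; by maximality $\hat Q$ is not heavy, so $\frac1{|\hat Q|}\sum_{x\in\hat Q}f(x)<\lambda$, and since $f\ge0$,
\begin{equation*}
\frac1{|Q|}\sum_{x\in Q}f(x)\le\frac{|\hat Q|}{|Q|}\cdot\frac1{|\hat Q|}\sum_{x\in\hat Q}f(x)<2\lambda.
\end{equation*}
Finally, if $x\notin\bigcup_Q Q$, then the singleton $\{x\}$ (a dyadic interval of length $1$) is not heavy: otherwise, by the first step it would lie in some maximal heavy interval containing $x$, contradicting $x\notin\bigcup_Q Q$. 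Hence $f(x)=\frac1{|\{x\}|}\sum_{y\in\{x\}}f(y)<\lambda\le\lambda$, as required.

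There is no real obstacle here: the only step that genuinely uses $f\in\ell^1$ (and not merely $f\ge0$) is the termination of the stopping time, i.e.\ the fact that dyadic averages over arbitrarily long intervals eventually drop below $\lambda$, which is exactly what guarantees the existence of maximal heavy intervals; everything else is the routine comparison of an interval's average with that of its dyadic parent.
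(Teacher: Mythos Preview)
Your argument is the standard dyadic stopping-time construction and is correct in every detail; in particular you correctly identify where $f\in\ell^1$ is used (termination of the chain of ancestors) and the parent comparison gives the upper bound $<2\lambda\le 2\lambda$. The paper itself does not supply a proof of this lemma---it is stated as the classical Calder\'on--Zygmund decomposition and used without further justification---so there is nothing to compare beyond noting that what you wrote is exactly the routine argument the authors are tacitly invoking.
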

In what follows we will call the above intervals $Q$ Calder\'on-Zygmund cubes.
We note that for the family of Calder\'on-Zygmund cubes we have  $\|E_{\{Q\}}f(x)\|_{\ell^\infty}\le 2\lambda$.
If the family $\{Q\}$ is fixed we will abbreviate the notation $E_{\{Q\}}f(x)$ to $E_{}f(x)$.

We will estimate $|\{x:|H_M^*f_0(x)|>\lambda\}|$, where we assume $f_0\ge 0$ and $\|f_0\|_{\ell^1}=1$.
$C$ will  denote a constant, which can vary on each
of its occurrences. For $\phi\in C_c^\infty$ and a number $s$ we denote $\phi_s(x)=\phi(\frac xs)$.
If $I$ is an interval with center $x_I$, we denote $\phi_I(x)=\phi_{|I|}(x-x_I)$.

\section{Lemmas.}

We begin with the following
\begin{lemma}\label{k_reg}
Fix dyadic integers $N_1, N_2$, an interval $J$ and $\phi\in C_c^\infty$. The interval $J$ is of the size specified above, but not necessarily dyadic. Let
\begin{equation}\label{ker_def}
K_{N_1,N_2}(x_1,x_2)=\sum_{y\in J}\phi_J(y)\mu_{N_1}(x_1-y)\mu_{N_2}(x_2-y)
\end{equation}
Then
\begin{enumerate}[1.]
\setcounter{enumi}{-1}
\item $K_{N_1,N_2}(x_1,x_2)$ is supported on the set
\begin{equation*}
  \{(x_1,x_2): |x_1-x_J|\le CN_1 ^\alpha\wedge  |x_2-x_J|\le CN_2^\alpha\}.
\end{equation*}
\item If $N_2\ne  N_1$ we have
\begin{equation}\label{ker_inf}
|K_{N_1,N_2}(x_1,x_2)|\le \frac{C|J|}{(N_1N_2)^{\alpha}}
\end{equation}
and for some $\delta>0$
\begin{equation}\label{ker_reg}
|K_{N_1,N_2}(x_1+h,x_2)-K_{N_1,N_2}(x_1,x_2)|\le \frac{C|J|}{(N_1N_2)^{\alpha}}\Big(\frac{|h|}{N_1^{\alpha}}\Big)^\delta
\end{equation}
\begin{equation}\label{ker_reg_r}
|K_{N_1,N_2}(x_1,x_2+h)-K_{N_1,N_2}(x_1,x_2)|\le \frac{C|J|}{(N_1N_2)^{\alpha}}\Big(\frac{|h|}{N_2^{\alpha}}\Big)^\delta
\end{equation}

\item If $N_2= N_1=N$ then there exists a kernel $Err_{N,J}(x_1,x_2)$ supported on the set
\begin{equation*}
  \{(x_1,x_2):|x_1-x_2|\le CN^{1-\epsilon}\}
\end{equation*}
satisfying
\begin{equation}\label{3::4}
  \sum_J|Err_{N,J}(x_1,x_2)|\le \frac{C}{N^{\alpha}}
\end{equation}
such that
\begin{equation}\label{ker_reg_1}
K_{N,N}(x_1,x_2)=\tilde K_{N,N}(x_1,x_2) +Err_{N,J}(x_1,x_2)+C_{N,J}\delta_0(x_1-x_2)
\end{equation}
where
\begin{equation*}
  |C_{N,J}|\le \frac{C|J|}{N^{1+\alpha}}
\end{equation*}
and the kernel $\tilde K_{N,N}(x_1,x_2)$ satisfies \eqref{ker_inf}, \eqref{ker_reg}, \eqref{ker_reg_r}.

\item Let $Q$ be an interval of length $|Q|\ge M^{\alpha-1+\epsilon}$, for some $\epsilon>0$, and let
\begin{equation*}
  K_N(x)=\frac1{|Q|}\mu_N*1_Q(x).
\end{equation*}
Then, for some $\delta>0$
\begin{equation}\label{kc_reg}
\sum_x |K_N(x+h)-K_N(x)|^2\le  \frac{C}{N^{\alpha}}\Big(\frac{|h|}{N^{\alpha}}\Big)^\delta
\end{equation}
\end{enumerate}
\end{lemma}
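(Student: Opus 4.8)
\emph{Overall strategy.} The plan is to deduce everything from van der Corput/Weyl estimates for the exponential sums attached to the measures $\mu_N$. For a finite $E\subset\mathbb Z$ write $S_E(\xi)=\sum_{m\in E}e^{2\pi i[m^\alpha]\xi}$, and let $\|\xi\|$ denote the distance from $\xi$ to $\mathbb Z$. The relevant facts, valid because $\alpha\notin\mathbb Z$, are that $S_{[N,2N]}$ is concentrated at the frequency $0$ --- $|S_{[N,2N]}(\xi)|\sim N$ for $\|\xi\|\lesssim N^{-\alpha}$, and $|S_{[N,2N]}(\xi)|\lesssim N^{1-\delta}$ once $\|\xi\|$ is a little larger --- with power decay in between; the same holds for $N\,\widehat{\mu_N}(\xi)=\sum_{m\sim N}\tfrac Nm\phi(\tfrac mN)e^{-2\pi i[m^\alpha]\xi}$ up to the bounded weight $N/m\sim1$, and the $[m^\alpha]$-versus-$m^\alpha$ discrepancy is absorbed by the Fourier expansion of $t\mapsto e^{2\pi i\{t\}\xi}$. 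Item~0 is immediate: a summand of \eqref{ker_def} vanishes unless $x_i-x_J\in[m_i^\alpha]-\mathrm{supp}\,\phi_J$, while $|[m_i^\alpha]|\le(2N_i)^\alpha$ and $|\mathrm{supp}\,\phi_J|\approx M^{\theta-\epsilon}\ll N_i^\alpha$.

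\emph{Item~1.} Expanding $\mu_{N_1},\mu_{N_2}$ gives
\[
K_{N_1,N_2}(x_1,x_2)=\sum_{m_1\sim N_1}\sum_{m_2\sim N_2}\frac{\phi(m_1/N_1)\,\phi(m_2/N_2)}{m_1 m_2}\,\phi_J\!\big(x_1-[m_1^\alpha]\big)\,1_{\{[m_1^\alpha]-[m_2^\alpha]=x_1-x_2\}}.
\]
The factor $\phi_J(x_1-[m_1^\alpha])$ confines $[m_1^\alpha]$ to an interval of length $\approx|J|$, hence $m_1$ to an interval $I$ with $|I|\lesssim|J|N_1^{1-\alpha}$ (consecutive $[m^\alpha]$ are $\sim N_1^{\alpha-1}\ge1$ apart), and for each admissible $m_1$ the equation determines $[m_2^\alpha]$, hence $m_2$, up to $O(1)$ choices. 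The trivial count $|I|$ yields only $|K_{N_1,N_2}|\lesssim|J|/(N_1^\alpha N_2)$, and \eqref{ker_inf} is equivalent to the extra saving $N_2^{1-\alpha}$, i.e. to the statement that, as $m_1$ runs over $I$, the integer $[m_1^\alpha]-(x_1-x_2)$ lies in the $\sim N_2^{1-\alpha}$-dense set $\{[m^\alpha]:m\sim N_2\}$ with the expected frequency $\lesssim|I|N_2^{1-\alpha}$; equivalently, using $1_{\{a=b\}}=\int_{\mathbb T}e^{2\pi i(a-b)t}\,dt$, this count is $\int_{\mathbb T}e^{-2\pi i(x_1-x_2)t}\,S_I(t)\,\overline{S_{[N_2,2N_2]}(t)}\,dt$. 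The contribution of $\|t\|\lesssim N_2^{-\alpha}$ is bounded by $|I|\cdot N_2^{1-\alpha}$ using $|S_I|\le|I|$, $|S_{[N_2,2N_2]}|\lesssim N_2$ and the measure $N_2^{-\alpha}$; the complementary range is where the real work lies, and is handled by combining the Weyl/van der Corput decay of $S_{[N_2,2N_2]}$ with the concentration of $S_I$ near $0$ on a window of width $\sim1/|J|$. For the regularity \eqref{ker_reg}, \eqref{ker_reg_r} I would pass to the Fourier side: $\widehat{K_{N_1,N_2}}(\xi_1,\xi_2)=\widehat{\mu_{N_1}}(\xi_1)\,\widehat{\mu_{N_2}}(\xi_2)\,\widehat{\phi_J}(\xi_1+\xi_2)$, insert $|e^{2\pi ih\xi_1}-1|\le\min(2,2\pi|h|\,\|\xi_1\|)$, note that on $\|\xi_1\|\lesssim N_1^{-\alpha}$ this factor is $\lesssim|h|/N_1^\alpha$ while on $\|\xi_1\|\gtrsim N_1^{-\alpha}$ one spends the decay of $\widehat{\mu_{N_1}}$, coupled through $\widehat{\phi_J}(\xi_1+\xi_2)$ forcing $\xi_2\approx-\xi_1$ to that of $\widehat{\mu_{N_2}}$, and finally use $\min(1,t)\le t^{\delta}$ to convert the Lipschitz-type gain into the claimed $\delta$-H\"older bound.

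\emph{Items~2 and 3.} For \eqref{ker_reg_1} I would split the double sum for $K_{N,N}$ according to $m_1-m_2$. The diagonal $m_1=m_2$ forces $x_1=x_2$ and produces $C_{N,J}(x_1)\,\delta_0(x_1-x_2)$ with $C_{N,J}(x_1)=\sum_{m\sim N}\tfrac{\phi(m/N)^2}{m^2}\phi_J(x_1-[m^\alpha])$, so $\sup_{x_1}|C_{N,J}(x_1)|\lesssim N^{-2}\cdot|J|N^{1-\alpha}=|J|/N^{1+\alpha}$. Since $|[m_1^\alpha]-[m_2^\alpha]|\sim|m_1-m_2|N^{\alpha-1}$, the terms with $|x_1-x_2|\le CN^{1-\epsilon}$ have $1\le|m_1-m_2|\lesssim N^{2-\alpha-\epsilon}$; I collect these into $Err_{N,J}$, whose support is then as stated, and bound $\sum_J|Err_{N,J}(x_1,x_2)|$ using $\sum_J\phi_J\lesssim1$ and the same equidistribution input (now giving $\#\{m_1\sim N:[m_1^\alpha]-(x_1-x_2)\in\{[m^\alpha]:m\sim N\}\}\lesssim N^{2-\alpha}$), which yields \eqref{3::4}. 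The off-diagonal remainder is $\tilde K_{N,N}$ and is treated exactly as in Item~1, the lower bound on $|m_1-m_2|$ only helping. For \eqref{kc_reg}, Plancherel gives $\sum_x|K_N(x+h)-K_N(x)|^2=\int_{\mathbb T}|e^{2\pi ih\xi}-1|^2\,|\widehat{\mu_N}(\xi)|^2\,\big|\tfrac1{|Q|}\widehat{1_Q}(\xi)\big|^2\,d\xi$ with $\big|\tfrac1{|Q|}\widehat{1_Q}(\xi)\big|\lesssim\min(1,(|Q|\,\|\xi\|)^{-1})$; inserting $|e^{2\pi ih\xi}-1|\lesssim\min(1,|h|\,\|\xi\|)$ and splitting at $\|\xi\|\sim N^{-\alpha}$, the averaging at scale $|Q|\ge M^{\alpha-1+\epsilon}$ supplies the remaining decay. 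This is the most routine of the four parts.

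\emph{Main obstacle.} The heart of the matter is Item~1 --- and, equivalently, the estimate \eqref{3::4}. The bounds \eqref{ker_inf}, \eqref{ker_reg}, \eqref{ker_reg_r} cannot be obtained from soft $L^1$, $L^2$ or duality inequalities, all of which lose powers of $M$; they require the arithmetic input above, used in the range $\|\xi\|\gtrsim N^{-\alpha}$, with careful accounting of the power decay of the Weyl sums and of the $[m^\alpha]$-versus-$m^\alpha$ correction. The hypothesis $1<\alpha\le1+\tfrac1{1000}$, together with the choice of the small parameter $\epsilon$, is what keeps the resulting error terms --- from the transition zone near frequency $0$, from the discrepancy estimates, and from the near-diagonal range in Item~2 --- subordinate to the main terms.
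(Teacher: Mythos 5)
Your outline correctly identifies the architecture of the lemma (item 0 by support, the diagonal $m_1=m_2$ term producing $C_{N,J}\delta_0(x_1-x_2)$ with the right size, the near-diagonal range $1\le|m_1-m_2|\lesssim N^{2-\alpha-\epsilon}$ collected into $Err_{N,J}$, the reduction of \eqref{ker_inf} to counting solutions of $[m_1^\alpha]-[m_2^\alpha]=x_1-x_2$, and \eqref{3::4} from the autocorrelation bound $|\mu_N*\check\mu_N(x)|\le CN^{-\alpha}$, $x\ne0$). This matches the skeleton of the paper, which however does not prove these estimates from scratch: it reduces items 1--2 to \cite{UZ} and the appendix of \cite{PZ2}, sketching the decomposition by periodized smooth cutoffs $\Psi_{jQ}$, $\widetilde\Psi_H$ over blocks of length $Q=N_1^{1-\delta_1}$, in which the zeroth Fourier coefficient of $\Psi_{jQ}$ is extracted exactly and identified with a smooth profile $|J|F(x_1,x_2)$ whose derivative bounds give \eqref{ker_inf}--\eqref{ker_reg_r}, while the nonzero modes are beaten by van der Corput with only a small \emph{relative} gain $Q^{-\eta}$ over the main term; and it quotes \eqref{kc_reg} from \cite{PZ1}, Lemma 3.5.

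The genuine gap is that the step you yourself call ``where the real work lies'' is not only missing but, as you propose to organize it, cannot be closed. In your counting formulation the target is a count $\lesssim |I|N_2^{1-\alpha}$ with $|I|\approx|J|N_1^{1-\alpha}$ and $|J|\approx M^{\theta-\epsilon}$. For $N_1=N_2=N$ (the case needed for $\tilde K_{N,N}$) even the sharp absolute-value bound $\int_{\mathbb T}|S_I||S_{[N,2N]}|\le\|S_I\|_{L^2}\|S_{[N,2N]}\|_{L^2}\approx(|I|N)^{1/2}$ exceeds the target by the factor $N^{(3\alpha-2)/2}|J|^{-1/2}$, a positive power of $M$ throughout the range $M^\theta\le N\le M$ because $3\alpha-2>\theta$. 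Hence any scheme of the form ``sup-norm Weyl/van der Corput saving on the minor arcs times an $L^1$ or $L^2$ norm of the other factor'' requires a saving of square-root strength ($\delta>1/2$ for $S_{[N,2N]}$, or square-root cancellation in the short sum $S_I$ uniformly off the major arc), which is not available for $[m^\alpha]$-phases with $\alpha\le1+\frac1{1000}$ and is nowhere established in your note; the same accounting defeats the two-variable Fourier-side argument you propose for \eqref{ker_reg}, since for $N_1=N_2$ the $L^1$ norm of $\widehat{K}$ is of order $\|\widehat{\mu_N}\|_{L^2}^2\approx N^{-1}\gg|J|N^{-2\alpha}$, so no triangle-inequality bound on the Fourier transform can produce \eqref{ker_inf}. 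The same problem recurs in your treatment of \eqref{kc_reg}: splitting Plancherel at $\|\xi\|\sim N^{-\alpha}$ and using minor-arc sup bounds again needs savings of order $1/2$, since $|Q|$ is only $\ge M^{\alpha-1+\epsilon}$; what actually works (and is what \cite{PZ1} exploits) is the autocorrelation cancellation $|\mu_N*\check\mu_N(x)|\le CN^{-\alpha}$, $x\ne0$, tested against $\frac{1_Q}{|Q|}*\frac{\check1_Q}{|Q|}$, not pointwise bounds on $\widehat{\mu_N}$. The missing idea, which is precisely the content of the \cite{UZ}/\cite{PZ2} argument the paper invokes, is to avoid absolute-value minor-arc estimates altogether: extract the main term exactly in physical space via the zeroth coefficient of the narrow periodized cutoff $\Psi_{jQ}$, so that the oscillatory remainder only has to be beaten by a tiny factor $Q^{-\eta}$ relative to the main term, which van der Corput on blocks of length $Q=N_1^{1-\delta_1}$ does deliver.
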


\begin{proof}
Statement 0 is obvious. Statements 1 and 2 follow by inspection
of the proofs of the Lemma (3.1) in \cite{UZ} and the  Lemma in the appendix of \cite{PZ2}.
We briefly sketch the necessary modifications of the argument.
Since the statement is translation invariant we can assume that $J$ is centered at 0, with length $\sim M^{\theta-\epsilon}$. We can assume $N_1\ge N_2$.
Let
\begin{equation*}
  Q=N_1^{1-\delta_1},\quad R=Q^{\delta_2},\quad H=Q^{\delta_3}
\end{equation*}
where ${\delta_1}, {\delta_2}, {\delta_3}$ are small positive constants, which will be specified later. We define
\begin{equation*}
  Z_{jQ}=\frac{1}{\alpha\big|x_1-x_2-(jQ)^\alpha\big|^\frac{\alpha-1}{\alpha}}.
\end{equation*}
One can easily check that under the conditions of the lemma the denominator never vanishes.
In fact one can check, that if
\begin{equation}\label{3::5}
   \alpha-\delta_1<\alpha \theta
\end{equation}
then the denominator of $Z_{jQ}$ is comparable to $N_2^\alpha$.
It what follows we will fix $\delta_1=\frac{1}{100}$. That, together with the choice of $\theta$
close to 1 will ensure \eqref{3::5}. $\theta$ can be assumed to be arbitrarily close to 1 without
loss of generality (see note in the Appendix of \cite{PZ2}). We define 1 - periodic functions
$\tilde\Psi_a$ and $\Psi_a$ by the conditions:
\begin{equation}
0\le \Psi_{jQ}\le1, \quad \Psi_{jQ}(t)=1 \text{ for } 1-Z_{jQ}\le t \le 1
\end{equation}

\begin{equation}
\text{supp}( \Psi_{jQ}(t))\subset  \Big\{t: 1-Z_{jQ}\Big(1+\frac1{R}\Big)\le t \le 1+Z_{jQ}\frac{1}{R}\Big\} +\zet,
\end{equation}
where $\zet$ is the set of integers,
\begin{equation}
  \text{ } \Psi_{jQ}(t)\in C^\infty  \text{ and  }
 |\partial^k \Psi_{jQ}(t)|\le C(RZ_{jQ}^{-1})^k \text{ for   }k\le 4
\end{equation}

\begin{equation}
0\le \widetilde\Psi_{H}\le 1,\text{ }
\widetilde\Psi_{H}\in C^\infty, \text{  } |\partial^k\widetilde\Psi_{H}|\le CH^k \text{ for } k\le 4
\end{equation}

\begin{equation}
\text{ supp}\widetilde\Psi_{H}\subset \Big[0,\frac2H\Big]+\zet \text{ and }\sum_{0\le P <H}\widetilde\Psi_{H}\Big(\Theta-\frac{P}{H}\Big)=1
\end{equation}

Now, we briefly adopt the following convention: The symbol $a\le_N b$ means $b-a=O(\frac1{N^{\alpha+\eta}})+C$
where $\eta>0$ is some constant depending on $\alpha$ and  independent of $N$, and $C>0$.
Similarly as in \cite{UZ} we have
\begin{align}\label{ker_inf_10}
&K_{N_1,N_2}(x_1,x_2)\le_{N_1}\\
&\le_{N_1}\sum_{0\le P <H}\sum_{j}\sum_{s=0}^{Q-1}\widetilde\Psi_{H}\Big((jQ+s)^\alpha-\frac PH\Big)\times\notag\\
&\qquad\times\Psi_{jQ}\Big(\Big(x_1-x_2-(jQ+s)^\alpha-\frac PH\Big)^{\frac1\alpha}\Big)\times\notag\\
&\qquad\times\phi_{N_1^\alpha}((jQ+s)^\alpha)\phi_{N_2^\alpha}(x_1-(jQ+s)^\alpha-x_2)\phi_J(x_1-(jQ+s)^\alpha)\notag
\end{align}

Now we replace $\Psi_{jQ}$ by its Fourier series.
By the definition, we have
\begin{multline*}
{c_\alpha}\Big(1-\frac1R\Big){(x_1-x_2-(jQ)^\alpha)^{-\frac{\alpha-1}{\alpha}}}\le\\
\le\widehat{\Psi}_{jQ}(0)\le {c_\alpha}\Big(1+\frac1R\Big){(x_1-x_2-(jQ)^\alpha)^{-\frac{\alpha-1}{\alpha}}}.
\end{multline*}

Since $\sum_{0\le P <H}\widetilde\Psi_{H}(\Theta-\frac{P}{H})=1$, the
right hand side expression corresponding to  $\widehat\Psi_{jQ}(0)$ becomes
\begin{align*}
&D(x_1,x_2)=\\
&\quad=\sum_{j,s}\widehat\Psi_{jQ}(0)
\phi_{N_1^\alpha}((jQ+s)^\alpha)\phi_{N_2^\alpha}(x_1-(jQ+s)^\alpha-x_2)\times\\
&\qquad\qquad\qquad\times\phi_J(x_1-(jQ+s)^\alpha)\\
&\quad\le\Big(1+\frac 1R\Big)\sum_{m}\frac{c_\alpha}{(x_1-x_2-(jQ)^\alpha)^{\frac{\alpha-1}{\alpha}}}
\phi_{N_1^\alpha}(m^\alpha)\phi_{N_2^\alpha}(x_1-m^\alpha-x_2)\times\\
&\qquad\qquad\qquad\times\phi_J(x_1-m^\alpha)\\
&\quad\le\Big(1+\frac 1R\Big)\int_{R_+}\frac{c_\alpha}{(x_1-x_2- t^{\alpha})^{\frac{\alpha-1}{\alpha}}}
\phi_{N_1^\alpha}(t^\alpha)\phi_{N_2^\alpha}(x_1-t^\alpha-x_2)\times\\
&\qquad\qquad\qquad\times\phi_J(x_1-t^\alpha)dt\\
&\quad=\Big(1+\frac 1R\Big)|J|\int_{R_+}\frac{c_\alpha}{ (x_2-|J|u)^{\frac{\alpha-1}{\alpha}}}
\phi_{N_1^\alpha}(x_1-|J|u)\phi_{N_2^\alpha}(|J|u-x_2)\times\\
&\qquad\qquad\qquad\times\phi(u)\frac{\alpha du}{(x_1-|J|u)^{\frac{\alpha-1}{\alpha}}}\\
&\quad=\Big(1+\frac 1R\Big)|J|F(x_1,x_2)
\end{align*}
By similar arguments, one can obtain the lower estimate
\begin{equation*}
D(x_1,x_2)\ge \Big(1-\frac 1R\Big)|J|F(x_1,x_2)
\end{equation*}
Now the function of $F(x_1,x_2)$  can be easily shown to satisfy the estimates
$|\partial_{x_1}F(x_1,x_2)|\le \frac{C}{N_1^{2\alpha-1}N_2^{\alpha-1}}$. This immediately yields
\eqref{ker_inf}, \eqref{ker_reg}, \eqref{ker_reg_1}.

In order to complete the argument we need to estimate the
summands  corresponding to coefficients $\widehat\Psi_{jQ}(k)$, $k\ne 0$.

For $j,P$ fixed, we are  left with the estimates for
\begin{align*}
&\sum_{s=0}^{Q-1}
\widetilde\Psi_{H}\Big((jQ+s)^\alpha-\frac PH\Big)\times\\
&\qquad\times\Big(\Psi_{jQ}\Big(\Big(x_1-x_2-(jQ+s)^\alpha-\frac PH\Big)^{\frac1\alpha}\Big)-\widehat\Psi_{jQ}(0)\Big)
\times\\
&\qquad\times\phi_{N_1^\alpha}((jQ+s)^\alpha)\phi_{N_2^\alpha}(x_1-(jQ+s)^\alpha-x_2)\phi_J(x_1-(jQ+s)^\alpha)
\end{align*}
Let $N_1\ne N_2$ or $|x_1-x_2|\ge CN_1$, and  $|J|\ge Q$. Then the above sum is exactly of the form
considered in \cite{UZ}. Let $\delta_1=\frac{1}{100}$. Using van der Corput Lemma we get an
estimate with additional $Q^{-\eta}$ factor, where $\eta>0$ is independent of $\alpha$. We fix
$\delta_2,\delta_3$ sufficiently small independent of $\alpha$. Then summing with respect to $j, P$
completes the job.  We note, that choosing $\theta$ sufficiently close to $1$ we can ensure
required $|J|\ge Q$. We omit further details, and refer the reader to \cite{UZ}.

The estimate \eqref{3::4}
follows from
\begin{equation*}
  |\mu_{N}*\check\mu_{N}(x)|\le  \frac{C}{N^\alpha} \text{ for }x\ne 0,
\end{equation*}
proved in \cite{UZ}.

The H\"older's estimate \eqref{kc_reg} has been  proved in \cite{PZ1}, Lemma 3.5. The proof there is carried out for a smooth function $\varphi$ in the place of characteristic function, but it carries over (with small modification in part III, see \cite{PZ1}).
\end{proof}

We fix small $\epsilon>0$, $1-\epsilon<\theta<1$, the function $f_0\ge 0$,  $\lambda >0$ and  the set of the Calder\'on-Zygmund
cubes $\{Q\}$ associated with $f_0$ by Lemma \ref{CZ_decomp}.
 By the $\ell^2$ boundedness of the maximal rough Hilbert transform $H_M^*$ and Lemma \ref{CZ_decomp}
we can assume that $f_0=0$
away from the $\bigcup_Q Q$.  Now we modify $f_0$ putting
\begin{equation}\label{Q_size_as}
f_0=0 \text{ on each Calder\'on-Zygmund
cube with }|Q|\le M^{\theta-2\epsilon}.
\end{equation}

We will denote new function again by $f_0$.  In the remark at the end of the paper we explain
why this procedure do not bring any loss of the generality.

We perform further reductions.
First we have
\begin{align*}
&\max_{B}|\sum_{N\le B}(\mu_N-\check\mu_N)*f_0|\le
\max_{B}|\sum_{N\le B}(\mu_N-\check\mu_N)*(f_{0,\infty}^{\lambda N})|+ \\
&\qquad\qquad+\max_{B}|\sum_{N\le B}(\mu_N-\check\mu_N)*(Ef_{0,\infty}^{\lambda N})|+\\
&\qquad\qquad+\max_{B}|\sum_{N\le B}(\mu_N-\check\mu_N)*(f_0^{\lambda N}-Ef_0^{\lambda N})|+\\
&\qquad\qquad+\max_{B}|\sum_{N\le B}(\mu_N-\check\mu_N)*Ef_0|\\
&\qquad=I+II+III+IV
\end{align*}
Now, $I$ has been estimated in \cite{PZ2}, \cite{UZ} using support properties.  The estimates for the $IV$ follow since $|Ef_0|\le C\lambda$
and the maximal Hilbert transform $H_M^*$ is bounded on  $\ell^2$.
The term $II$ requires some care. First we note, that
$G_M=\sum_{M^\theta \le N \le M}(\mu_N-\check\mu_N)*(Ef_{0,\infty}^{\lambda N})\in \ell^2$
and $\|G_M\|^2_{\ell^2}\le C\lambda \|f_0\|_{\ell^1}$.
This has been proved in \cite{PZ2} page 22 with $f_{0,\infty}^{\lambda N}$ replaced by $f_{0}^{\lambda N}$,
The proof of our statement is exactly the same, and we do not present any details.

Next,
the key observation is the following property of  the Calder\'on-Zygmund cubes  in our setting
\begin{lemma}\label{key_CZ}
\begin{enumerate}[1.]
\item Let $Q$ be  the Calder\'on-Zygmund cube which contains the point of the support of $f_{0,\infty}^{\lambda N}$.
Then  $|Q|\ge CN$.
\item Let $Q$ be  the Calder\'on-Zygmund cube which contains the point of the support of $f_0^{\frac{\lambda N}{A}}$.
Then  $|Q|\ge CNA^{-1}$.
\end{enumerate}
\end{lemma}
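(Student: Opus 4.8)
The plan is to read both bounds directly off the upper average estimate \eqref{CZ_decom} in the Calder\'on--Zygmund decomposition; no oscillation or cancellation is needed. Note first that, although the cubes $\{Q\}$ were selected for the original $f_0$ and $f_0$ was afterwards only decreased (by \eqref{Q_size_as}), the inequality $\frac1{|Q|}\sum_{x\in Q}f_0(x)\le 2\lambda$ continues to hold for the modified $f_0$, and this is the only property of $\{Q\}$ that will be used.

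For part 1, I would fix a point $x_0\in Q$ lying in the support of $f_{0,\infty}^{\lambda N}$. By the definition $f_{0,\infty}^{\lambda N}=f_0-f_0^{\lambda N}=f_0\cdot 1_{\{|f_0|\ge\lambda N\}}$ this forces $f_0(x_0)\ge\lambda N$. Since $f_0\ge 0$, the single term $f_0(x_0)$ is a lower bound for $\sum_{x\in Q}f_0(x)$, while \eqref{CZ_decom} gives $\sum_{x\in Q}f_0(x)\le 2\lambda|Q|$. Comparing the two,
\[
\lambda N\le f_0(x_0)\le\sum_{x\in Q}f_0(x)\le 2\lambda|Q|,
\]
and dividing by $\lambda$ yields $|Q|\ge N/2$, i.e.\ the claim with $C=\tfrac12$.

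For part 2 the argument is identical once the notation is unwound: $f_0^{\frac{\lambda N}{A}}=f_0\cdot 1_{\{|f_0|\sim\frac{\lambda N}{A}\}}$, so membership of $x_0\in Q$ in its support means $f_0(x_0)\ge c\,\frac{\lambda N}{A}$ for the absolute constant $c>0$ implicit in the symbol $\sim$. The same two inequalities then give
\[
c\,\frac{\lambda N}{A}\le f_0(x_0)\le\sum_{x\in Q}f_0(x)\le 2\lambda|Q|,
\]
hence $|Q|\ge\frac{c}{2}\,\frac{N}{A}=CNA^{-1}$. The only point that requires any care is this bookkeeping of the constant hidden in the symbol $\sim$; there is no genuine obstacle in the proof.
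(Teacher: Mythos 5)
Your proof is correct and is exactly the argument the paper intends: the paper's proof is the one-line remark that the lemma "follows immediately from the upper inequality in \eqref{CZ_decom}", which is precisely the comparison $\lambda N\le f_0(x_0)\le 2\lambda|Q|$ (resp.\ $c\lambda N/A\le f_0(x_0)\le 2\lambda|Q|$) you spelled out. Your extra remark that the modified $f_0$ still satisfies the upper average bound, since it was only decreased, is a correct and sensible bit of bookkeeping.
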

\begin{proof}
The lemma follows immediately from the upper inequality in \eqref{CZ_decom}.
\end{proof}

From \eqref{Q_size_as} and  \eqref{kc_reg} we infer that
  the function $\mu_N*1_Q$ is $\ell^2$ is H\"older regular.  Consequently we can estimate $II$ in the similar
manner as the maximal Calder\'on-Zygmund operator. First observe that by \eqref{kc_reg} the following estimates hold
\begin{equation*}
\sum_{N\le B}|\phi_{B^\alpha}*(\mu_N-\check \mu_N)(x)|\le C\phi_{4B^\alpha}(x)
\end{equation*}
\begin{equation*}
\Big|\big(\mu_N-\phi_{B^\alpha}*\mu_N\big)*\frac{1_Q}{|Q|}(x)\Big|\le C\Big(\frac{B}{N}\Big)^\delta\phi_{4N^\alpha}(x)\text{ for  } B\le N
\end{equation*}
and consequently
\begin{multline*}
\Big|\sum_{ N \ge B}(\mu_N-\check\mu_N)*(Ef_{0,\infty}^{\lambda N}(x)-\phi_{B^\alpha}*G_M(x))\Big|\le \\
\le C M(Ef_0)(x) +C( M((Ef_0)^2))^\frac12(x)
\end{multline*}
where $M$ denotes the classical Hardy-Littlewood maximal function. By weak type (1,1) of $M$ we obtain $II\le \frac C\lambda$.
We leave the details (which are standard arguments in the Calder\'on-Zygmund theory) to the reader.

So the main term is $III$. We further decompose $f_0$
writing down
\begin{equation*}
  f_0^{\lambda N}=\sum_{A\text{-dyadic}} f_0^{\frac{\lambda N}{A}}
\end{equation*}
and obtain the following estimate
\begin{equation}\label{}
|III|\le\max_{B}|\sum_{N\le B}\sum_{1\le A\le N}(\mu_N-\check\mu_N)*(f_0^{\frac{\lambda N}{A}}-Ef_0^{\frac{\lambda N}{A}})|
\end{equation}
For $i=1,2$ we write
\begin{equation}\label{bsi_def}
b^{A,N}_{s,i}(x)=\sum_{Q\in \mathcal D_{A,N,s}^i}
(1_{Q}(x)f_0^{\frac{\lambda N}{A}}(x)-E_{Q}f_0^{\frac{\lambda N}{A}}(x)),
\end{equation}
\begin{equation}\label{fsi_def}
f^{A,N}_{s,i}(x)=\sum_{Q\in \mathcal D_{A,N,s}^i}
1_{Q}(x)f_0^{\frac{\lambda N}{A}}(x),
\end{equation}
where the families of cubes $ \mathcal D_{A,N,s}^i$, $i=1,2$ are defined as follows
\begin{align}\label{D1_def}
 \mathcal D_{A,N,s}^1&=\{Q: |Q|\sim(2^{-s}N)^\alpha,\,2^s> A\}\notag\\
 \mathcal D_{A,N,s}^2&=\{Q: |Q|\sim(2^{-s}N)^\alpha,\,2^s\le A\}.
\end{align}
Furthermore, for $i=1,2$ we let
\begin{equation}\label{fi_def}
f^{A,N}_{i}(x)=\sum_{s\ge0} f^{A,N}_{s,i}(x),
\end{equation}
and for $i=2$
\begin{equation}\label{3::3}
  f^N_{s,2}(x)=\sum_{A:\,A\ge2^{s}}f^{A,N}_{s,2}(x).
\end{equation}

From the above definitions, for fixed $A,N$ we have
\begin{equation}\label{}
\bigcup_{s\ge 0}( \mathcal D_{A,N,s}^1\cup \mathcal D_{A,N,s}^2)=\{Q: |Q|\le N^\alpha\}
\end{equation}

We note that $ \text{supp} \mu_N*1_{Q}\subset Q^{**}$
if $|Q|\ge N^\alpha$, and by \eqref{CZ_decom} we have $\sum |Q^{**}|\le \frac1\lambda$.
Hence it will suffice to estimate

\begin{equation}\label{3::1}
H_i^*(x)=\max_{B}|\sum_{N\le B}\sum_{A}\sum_{s\ge 0}(\mu_N-\check\mu_N)*b^{A,N}_{s,i}(x)|
\end{equation}

\begin{lemma}\label{max_sparse_est}
Let $f^{A,N}_{i}$ be defined in \eqref{fi_def} and let $\beta_N\ge 0$ be a sequence of numbers. We define a sequence of integers $N_j$,
$1\le j\le j_{max}^A$ by
\begin{equation}\label{n_jump_def}
N_j=\max\big\{2^k:\sum_{N\le 2^k}\beta_N\le j\,\lambda_0\big\}
\end{equation}
whenever the maximum exists. Assuming
\begin{equation*}
  \beta_N\le \lambda_0
\end{equation*}
we see that $N_j$'s form a strictly increasing sequence of dyadic integers.
Let  $x\in J$ be such that
\begin{equation}\label{max_cond_1}
\max_{B}|\sum_{N\le B } \mu_N*f^{A,N}_{i}-\sum_{N\le B} \beta_N|\ge 4\lambda_0
\end{equation}
Then
\begin{equation}
\max_{j}|\sum_{N\le N_j } \mu_N*f^{A,N}_{i}-\sum_{N\le N_j} \beta_N|\ge \lambda_0
\end{equation}
The same is true for the functions $f^N_{s,2}$ in the place of $f^{A,N}_{i}$. We call $j_{max}^s$ the largest value of $j$ in this case.
\end{lemma}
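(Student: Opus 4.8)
The plan is to discretize the maximal truncation over the parameter $B$ along the sparse jump sequence $\{N_j\}$, at the cost of only the harmless factor $4$. The one substantive ingredient is that, by positivity, both partial sums in \eqref{max_cond_1} are monotone in the dyadic variable $B$; everything else is bookkeeping built on the defining property \eqref{n_jump_def} of the $N_j$ and on the hypothesis $\beta_N\le\lambda_0$.

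First I would fix $x$ and abbreviate $S(B)=\sum_{N\le B}\mu_N*f^{A,N}_i(x)$ and $T(B)=\sum_{N\le B}\beta_N$, the sums running over dyadic $N$ in the relevant range. Since $\mu_N$ is a nonnegative measure, $f^{A,N}_i\ge 0$ by \eqref{fi_def}, and $\beta_N\ge 0$, both $S$ and $T$ are nondecreasing step functions of the dyadic parameter $B$, and $T$ jumps by at most $\beta_B\le\lambda_0$ at each dyadic point. Next I would record the gap estimate: by \eqref{n_jump_def} the point $N_j$ is the largest dyadic $B$ with $T(B)\le j\lambda_0$, so $T(N_j)\le j\lambda_0$, while — unless $N_j$ is already the largest scale $M$ — the next dyadic point $2N_j$ satisfies $T(2N_j)>j\lambda_0$; since $2N_j$ is the unique dyadic integer in $(N_j,2N_j]$ and $\beta_{2N_j}\le\lambda_0$, this forces $T(N_j)=T(2N_j)-\beta_{2N_j}>(j-1)\lambda_0$. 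Hence
\[
(j-1)\lambda_0<T(N_j)\le j\lambda_0\qquad\text{for all }j,
\]
and in particular $0\le T(N_{j+1})-T(N_j)<2\lambda_0$. It is convenient to adjoin a formal node $N_0$ lying below the smallest admissible scale, so that $S(N_0)=T(N_0)=0$; then $0\le T(N_1)-T(N_0)=T(N_1)\le\lambda_0$ as well, and since $\{N_j\}$ stabilizes at $j=j_{max}^A$ with $N_{j_{max}^A}$ the largest scale, every admissible $B$ lies in some interval $[N_j,N_{j+1}]$, $j\ge 0$.

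Then I would finish by contraposition: assume $\max_j|S(N_j)-T(N_j)|<\lambda_0$, take any $B$, and pick $j\ge 0$ with $N_j\le B\le N_{j+1}$. Monotonicity of $S$ and $T$ gives
\[
S(N_j)-T(N_{j+1})\ \le\ S(B)-T(B)\ \le\ S(N_{j+1})-T(N_j),
\]
and rewriting the two outer quantities as $\big(S(N_{j+1})-T(N_{j+1})\big)+\big(T(N_{j+1})-T(N_j)\big)$ and $\big(S(N_j)-T(N_j)\big)-\big(T(N_{j+1})-T(N_j)\big)$ respectively, the gap estimate and the assumption bound both of them by $\lambda_0+2\lambda_0=3\lambda_0$ in absolute value. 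Hence $|S(B)-T(B)|<3\lambda_0$ for every $B$, so $\max_B|S(B)-T(B)|\le 3\lambda_0<4\lambda_0$, contradicting \eqref{max_cond_1}; this yields the claim. For the functions $f^N_{s,2}$ in place of $f^{A,N}_i$ I would repeat the argument verbatim — the only facts used are $\mu_N*f^N_{s,2}(x)\ge 0$ (so the corresponding $S$ is nondecreasing) and the definition \eqref{n_jump_def} of the $N_j$, which depends on $\beta_N$ alone — and denote by $j_{max}^s$ the index at which the corresponding sequence stabilizes.

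I do not anticipate a genuine obstacle here: the argument is a routine discretization of a maximal truncation. The one place to be careful is the endpoint bookkeeping — the ranges $B<N_1$ and $B$ at the top scale $M$ — which is why I absorb both into the formal node $N_0$ and the identity that $N_{j_{max}^A}$ is the largest scale, so that the single inclusion $N_j\le B\le N_{j+1}$ really does exhaust all cases.
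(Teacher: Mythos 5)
Your proof is correct and rests on the same ingredients as the paper's: positivity of $\mu_N*f^{A,N}_i$ and the fact that, by \eqref{n_jump_def} and $\beta_N\le\lambda_0$, the $\beta$-sums change by at most $2\lambda_0$ between consecutive nodes $N_j$, so the deviation at an arbitrary $B$ is controlled by the deviations at the neighboring nodes. You package this contrapositively via a sandwich inequality (getting $3\lambda_0<4\lambda_0$), while the paper argues directly at the maximizing $B$, showing that if the node $N_j$ fails then $N_{j+1}$ works; this is only a cosmetic difference.
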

\begin{proof}
Fix $B$ maximising the estimate in \eqref{max_cond_1} and the unique $j$ such that $N_j<B\le N_{j+1}$. Assume, that
\begin{equation}\label{max_cond_1_1}
|\sum_{N\le N_j } \mu_N*f^{A,N}_{i}-\sum_{N\le N_j} \beta_N|\le \lambda_0
\end{equation}
Then
\begin{equation}
|\sum_{N_j<N\le B } \mu_N*f^{A,N}_{i}-\sum_{N_j<N\le B} \beta_N|\ge 3\lambda_0
\end{equation}
and by \eqref{n_jump_def} we must have
\begin{equation}
\sum_{N_j<N\le B } \mu_N*f^{A,N}_{i}-\sum_{N_j<N\le B} \beta_N\ge 3\lambda_0
\end{equation}
Since $f^{A,N}_{i}\ge 0$, again by \eqref{n_jump_def}
\begin{multline*}
\sum_{N_j<N\le N_{j+1} } \mu_N*f^{A,N}_{i}-\sum_{N_j<N\le N_{j+1}} \beta_N\ge\\
\ge\sum_{N_j<N\le B } \mu_N*f^{A,N}_{i}-\sum_{N_j<N\le B} \beta_N-{\lambda_0}\ge 2\lambda_0
\end{multline*}
Applying \eqref{max_cond_1_1} we get
\begin{equation}
|\sum_{N\le  N_{j+1}  } \mu_N*f^{A,N}_{i}-\sum_{N\le  N_{j+1} } \beta_N|\ge \lambda_0
\end{equation}
The proof for functions $f^N_{s,2}$ is similar.
\end{proof}

\noindent{\bf Remark.} Under the assumptions of the above lemma we can split the set of dyadic naturals into two collections
\begin{equation*}
  \mathcal C_1=\{N: \beta_N\le \lambda_0\}\qquad\text{and}\qquad
  \mathcal C_2=\{N: \beta_N>\lambda_0\},
\end{equation*}
such that
\begin{equation}\label{max_cond_1-cont}
\max_{j}\Big|\sum_{N\le N_j, N\in \mathcal C_1 } \mu_N*f^{A,N}_{i}-\sum_{N\le N_j, N\in \mathcal C_1} \beta_N\Big|\ge \frac12 \lambda_0
\end{equation}
or
\begin{equation}\label{}
\max_{j}|\sum_{N\le N_j, N\in \mathcal C_2 } \mu_N*f^{A,N}_{i}-\sum_{N\le N_j, N\in \mathcal C_2} \beta_N|\ge \frac12 \lambda_0
\end{equation}
and the cardinality of $|\mathcal C_2|\le j_{max}^A$.

\begin{lemma}\label{max_cs}
Fix an interval $J$, $A$
and $i$. Let $b^{A,N}_{s,i}$ be defined in \eqref{bsi_def}, $x\in J$ and
\begin{equation}
\max_{B}|\sum_{N\le B }\sum_{s} \mu_N*b^{A,N}_{s,i}(x)|\ge 4\lambda_0
\end{equation}
Then there exists the sequence of nonegative numbers $\beta_N$ independent on $x\in J$, such that \eqref{max_cond_1} holds, or
(the definition of $ER$  below) $ER(x)\ge \lambda_0$.
The sequence $\beta_N$  depends on $J$.

If $\{N_j\}$ is a sequence of numbers such that \eqref{max_cond_1} holds for \eqref{beta_def}
then
\begin{equation}
\max_{j}|\sum_{N\le N_j }\sum_{s} \mu_N*b^{A,N}_{s,i}(x)|\ge \frac{\lambda_0}8
\end{equation}
or $ER(x)\ge \lambda_0$.
\end{lemma}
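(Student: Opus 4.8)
The plan is to reduce the maximal estimate for the $b^{A,N}_{s,i}$ back to the maximal estimate for the non-cancelled pieces $f^{A,N}_{i}$ via Lemma \ref{max_sparse_est}, with the difference between the two being collected into an error term $ER(x)$ coming from the conditional expectations and the failure of $\mu_N$ to be constant on the cubes $Q$ at scale $|Q|\sim(2^{-s}N)^\alpha$. First I would recall that $b^{A,N}_{s,i}=f^{A,N}_{s,i}-E_{\{Q\}}f^{A,N}_{s,i}$, so that
\[
\sum_{N\le B}\sum_s\mu_N*b^{A,N}_{s,i}(x)=\sum_{N\le B}\sum_s\mu_N*f^{A,N}_{s,i}(x)-\sum_{N\le B}\sum_s\mu_N*E_{\{Q\}}f^{A,N}_{s,i}(x).
\]
For $x\in J$ the plan is to define $\beta_N$ to be (essentially) the value that $\sum_s\mu_N*E_{\{Q\}}f^{A,N}_{s,i}$ would take if $\mu_N*\frac{1_Q}{|Q|}$ were replaced by a constant averaged over $J$; that is, something like
\begin{equation}\label{beta_def}
\beta_N=\frac{1}{|J|}\sum_{z\in J}\sum_s\mu_N*E_{\{Q\}}f^{A,N}_{s,i}(z),
\end{equation}
which is manifestly nonnegative (since $f_0\ge0$) and independent of the particular $x\in J$, depending only on $J$, $A$, $i$. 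The error $ER(x)$ is then defined as the maximal-in-$B$ discrepancy
\[
ER(x)=\max_B\Big|\sum_{N\le B}\sum_s\big(\mu_N*E_{\{Q\}}f^{A,N}_{s,i}(x)-\tfrac{1}{|J|}\textstyle\sum_{z\in J}\mu_N*E_{\{Q\}}f^{A,N}_{s,i}(z)\big)\Big|.
\]
With these definitions, if $ER(x)<\lambda_0$ then on the hypothesis $\max_B|\sum_{N\le B}\sum_s\mu_N*b^{A,N}_{s,i}(x)|\ge4\lambda_0$ the triangle inequality forces $\max_B|\sum_{N\le B}\sum_s\mu_N*f^{A,N}_{s,i}(x)-\sum_{N\le B}\beta_N|\ge3\lambda_0$, which is more than enough to invoke \eqref{max_cond_1} (with $f^{A,N}_i=\sum_s f^{A,N}_{s,i}$ and a harmless absorption of the factor, or simply with the stated constant $4\lambda_0$ after a trivial rescaling of $\lambda_0$).

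For the second assertion I would run the same comparison in reverse along the sparse sequence $\{N_j\}$: assuming $ER(x)<\lambda_0$, Lemma \ref{max_sparse_est} applied to $f^{A,N}_i$ and the sequence $\beta_N$ gives $\max_j|\sum_{N\le N_j}\mu_N*f^{A,N}_i(x)-\sum_{N\le N_j}\beta_N|\ge\lambda_0$ (note $\beta_N\le\lambda_0$ may be assumed after splitting off the exceptional indices as in the Remark, so the sequence $N_j$ is well defined and strictly increasing). Adding back the expectation terms and using $ER(x)<\lambda_0$ once more — this time evaluated at the stopping times $N_j$, which are among the $B$'s in the definition of $ER$ — yields $\max_j|\sum_{N\le N_j}\sum_s\mu_N*E_{\{Q\}}f^{A,N}_{s,i}(x)-\sum_{N\le N_j}\beta_N|<\lambda_0$, and hence
\[
\max_j\Big|\sum_{N\le N_j}\sum_s\mu_N*b^{A,N}_{s,i}(x)\Big|\ge\lambda_0-\lambda_0-\cdots,
\]
where I must be a little careful with constants: the clean bookkeeping is to prove $\max_j|\sum_{N\le N_j}\sum_s\mu_N*b^{A,N}_{s,i}(x)|\ge\lambda_0/8$ by choosing the intermediate thresholds ($4\lambda_0$ in the hypothesis, $\lambda_0$ coming out of Lemma \ref{max_sparse_est}, and the $\lambda_0$-sized errors) so that the surviving margin is at least $\lambda_0/8$. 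This is just arithmetic once the three ingredients — the $b=f-Ef$ splitting, the definition \eqref{beta_def}, and the definition of $ER$ — are in place.

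I expect the main obstacle to be purely notational: getting the constants to line up through the two applications of the error bound and the one application of Lemma \ref{max_sparse_est} so that exactly $\lambda_0/8$ (and not, say, $\lambda_0/2$ or $\lambda_0/16$) comes out, and making sure that $\beta_N$ as defined genuinely does not depend on $x\in J$ while still being close enough to $\sum_s\mu_N*E_{\{Q\}}f^{A,N}_{s,i}(x)$ that the difference is controlled by a quantity ($ER$) which, in the subsequent sections, will be shown to be small outside a set of measure $\le C/\lambda$. The only point requiring genuine (as opposed to routine) thought is the placement of $ER(x)$ on the correct side of every inequality so that the logical structure — ``\eqref{max_cond_1} holds OR $ER(x)\ge\lambda_0$'' in the first part, and the analogous dichotomy in the second — is preserved; there is no analytic input beyond the elementary triangle inequality and the combinatorial Lemma \ref{max_sparse_est}, since the regularity of $\mu_N*1_Q$ (via \eqref{kc_reg}) is not used here but rather later, when $ER$ is estimated.
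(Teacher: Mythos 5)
Your construction is essentially the paper's proof: the paper likewise writes $\sum_s\mu_N*b^{A,N}_{s,i}=\mu_N*f^{A,N}_i-F_{A,N}$ with $F_{A,N}=\mu_N*Ef^{A,N}_i$, takes $\beta_N=E_{\{J\}}F_{A,N}$ (your \eqref{beta_def}), and controls the passage between the $b$-sums and the $f$-sums by the discrepancy $ER$, so that the dichotomy follows from the triangle inequality together with Lemma \ref{max_sparse_est}, exactly as you argue. The only differences are cosmetic: the paper defines $ER(x)=\sum_N|F_{A,N}(x)-E_{\{J\}}F_{A,N}(x)|$, which dominates your maximal-in-$B$ version, and it establishes the $\ell^1$ smallness of $ER$ (estimate \eqref{3::2}, via \eqref{kc_reg}) inside this proof rather than deferring it as you do, but that estimate is not part of the statement itself, and the constant bookkeeping you flag is left equally implicit in the paper.
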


\begin{proof}
By the definition we have $\sum_s \mu_N*b^{A,N}_{s,i}(x)= \mu_N*f^{A,N}_{i}(x)- F_{A,N}(x)$.
where $ F_{A,N}(x)= \mu_N*Ef^{A,N}_{i}(x)$.
Then we put
\begin{equation}\label{beta_def}
\beta_N=E_{\{J\}} F_{A,N}(x)
\end{equation}
 where  $E_{\{J\}}$ is the  conditional expectation operator
defined by \eqref{exp_oper}, and $x\in J$.
By \eqref{Q_size_as} we have $|Q|\ge N^{1-3\epsilon}$.
We will show that the error function
\begin{equation}
ER(x)=\sum_{N}| F_{A,N}(x)-E_{\{J\}} F_{A,N}(x)|
\end{equation}
satisfies
\begin{equation}\label{3::2}
\|ER\|_{\ell^1}\le CM^{-\delta_e}\|f_0\|_{\ell^1}.
\end{equation}
where $\delta_e$ is some small constant depending on the $\epsilon$ in the definition of $J$. Observe
\begin{align*}
  F_{A,N}(x)&=
  \mu_N*\Big(\sum_{Q}\frac{1_Q}{|Q|}\sum_Qf_i^{A,N}\Big)\ (x)\\
  &=\sum_Q\frac{C_{N,Q}}{|Q|}\ \mu_N*1_Q(x)\\
  &=\sum_Q\rho_{N,Q}(x)\cdot C_{N,Q},
\end{align*}
where
\begin{equation*}
  \rho_{N,Q}=\mu_N*\frac{1_Q}{|Q|},\qquad C_{N,Q}=\sum_{x\in Q}f_i^{A,N}(x).
\end{equation*}
Consequently, for $x\in J$
\begin{align*}
  ER(x)&\le\sum_N\sum_QC_{N,Q}\Big|E_{\{J\}}\rho_{N,Q}(x)- \rho_{N,Q}(x)\Big|\\
  &\le\sum_N\sum_Q\,C_{N,Q}\,\frac{1_J(x)}{|J|}\sum_{h\in J}\big|\rho_{N,Q}(h)-\rho_{N,Q}(x)\big|\\
  &\le\sum_N\sum_Q\,C_{N,Q}\,\frac{1_J(x)}{|J|}\sum_{h\in J_0^*}\big|\rho_{N,Q}(h+x)-\rho_{N,Q}(x)\big|,
\end{align*}
where $J_0^*$ is the cube centered at 0, with size double that of $J$. Thus (recall, that $|J|\sim M^{\theta-\epsilon}$)
\begin{align*}
 \|ER\|_{\ell^1}&\le\sum_N\sum_Q\,\frac{C_{N,Q}}{|J|}\sum_J\sum_{x\in J}\sum_{h\in J_0^*}\big|\rho_{N,Q}(h+x)-\rho_{N,Q}(x)\big|\\
 &\le \frac{1}{|J|}\sum_N\sum_Q C_{N,Q}\sum_{h\in J_0^*}\sum_{x}\big|\rho_{N,Q}(h+x)-\rho_{N,Q}(x)\big|\\
 &\le C \frac{1}{|J|}\sum_N\sum_Q C_{N,Q}\sum_{h\in J_0^*}\Big(\frac{|h|}{N^\alpha}\Big)^\delta\\
 &\le C \Big(\frac{M^{\theta-\epsilon}}{M^{\theta\alpha}}\Big)^\delta\sum_N\sum_Q C_{N,Q}\\
 &\le C M^{-\epsilon\delta}\,\|f_0\|_{\ell^1}.
\end{align*}
We have used an $\ell^1$ H\"older estimate for $\rho_{N,Q}$. It follows from the $\ell^2$ estimate \eqref{kc_reg} by Cauchy-Schwarz inequality. We have also used
\begin{equation*}
  \sum_{N,Q} C_{N,Q}\le \|f_0\|_{\ell^1}.
\end{equation*}

Finally, for the case $i=2$ we define, similarly to $F_{A,N}$,
\begin{equation*}
  F^N_{s,2}(x)=\mu_N*Ef^N_{s,2}(x),
\end{equation*}
where $f^N_{s,2}$ was defined in \eqref{3::3}.
The resulting $\beta_N=E_{\{J\}} F^N_{s,2}(x)$ and the error function $ER$ can be treated in the same way.
\end{proof}
\begin{lemma}\label{jmax_est}
Let
 $$\mathcal A_A=\{x\in Z: \sum_{N}E_{\{J\}} F_{A,N}(x)\ge \lambda A^2\},$$
 $$\mathcal A^s=\{x\in Z: \sum_{N}E_{\{J\}} F^{N}_{s,2}(x)\ge \lambda 2^{s\epsilon}\}.$$
Then
we have $|\mathcal A_A|\le \frac{1}{\lambda A^2}$,
 $|\mathcal A^s|\le \frac{1}{\lambda 2^{s\epsilon}}$,
moreover for each $J$, $J\subset \mathcal A_A$ or   $J\cap \mathcal A_A=\emptyset $ and
$J\subset \mathcal A^s$ or   $J\cap \mathcal A^s=\emptyset $.

\end{lemma}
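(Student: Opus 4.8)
The plan is to deduce both the measure bounds and the "all-or-nothing" intersection property directly from the definitions of $\mathcal A_A$ and $\mathcal A^s$, together with the $\ell^1$ mass bound on $F_{A,N}$ and $F^N_{s,2}$ and the fact that these functions are constant on each $J$. First I would record the $\ell^1$ estimate. Recall from the proof of Lemma \ref{max_cs} that $F_{A,N}=\mu_N*Ef^{A,N}_i$ and that $\sum_{N,Q}C_{N,Q}\le\|f_0\|_{\ell^1}=1$; since $\sum_x\mu_N*\frac{1_Q}{|Q|}(x)\le C/N^{\alpha}\cdot |Q|/|Q|$... more simply, $\|\mu_N*g\|_{\ell^1}\le\|\mu_N\|_{\ell^1}\|g\|_{\ell^1}\le\frac{C}{N}\|g\|_{\ell^1}$ and $Ef^{A,N}_i$ has the same $\ell^1$ norm as $f^{A,N}_i$. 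Summing the geometric factors $\sum_N\frac1N<\infty$ (the relevant $N$ range over dyadic integers in $[M^\theta,M]$, but the bound is uniform) yields $\big\|\sum_N F_{A,N}\big\|_{\ell^1}\le C\|f_0\|_{\ell^1}$; one must check that the constant is in fact $\le 1$, which follows because $\sum_N\mu_N$ has total mass $\sum_{M^\theta\le N\le M}\sum_m\phi(m/N)/m\le C$, and a closer accounting using disjointness of supports (the point masses $[m^\alpha]$ are distinct) together with the expectation-operator bound gives the clean constant. The same computation applies to $F^N_{s,2}=\mu_N*Ef^N_{s,2}$.

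Given $\big\|\sum_N F_{A,N}\big\|_{\ell^1}\le 1$, Chebyshev's inequality immediately gives
\begin{equation*}
|\mathcal A_A|=\Big|\Big\{x:\sum_N E_{\{J\}}F_{A,N}(x)\ge\lambda A^2\Big\}\Big|\le\frac{1}{\lambda A^2}\Big\|E_{\{J\}}\sum_N F_{A,N}\Big\|_{\ell^1}\le\frac{1}{\lambda A^2},
\end{equation*}
where the last step uses $\|E_{\{J\}}g\|_{\ell^1}\le\|g\|_{\ell^1}$ (the conditional expectation is an $\ell^1$ contraction, being an average over each $J$). The identical argument with $2^{s\epsilon}$ in place of $A^2$ yields $|\mathcal A^s|\le\frac{1}{\lambda 2^{s\epsilon}}$.

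For the all-or-nothing property, note that by construction the function $x\mapsto E_{\{J\}}F_{A,N}(x)$ is constant on each dyadic interval $J$ of the fixed family (this is the defining property of the conditional expectation operator $E_{\{J\}}$ in \eqref{exp_oper}), hence so is the sum $x\mapsto\sum_N E_{\{J\}}F_{A,N}(x)$. Therefore the condition $\sum_N E_{\{J\}}F_{A,N}(x)\ge\lambda A^2$ either holds for every $x\in J$ or for no $x\in J$; that is, each $J$ is either contained in $\mathcal A_A$ or disjoint from it. The same reasoning, using that $E_{\{J\}}F^N_{s,2}$ is $J$-measurable, gives the corresponding statement for $\mathcal A^s$. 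There is no real obstacle here; the only point requiring a little care is the bookkeeping in the $\ell^1$ estimate — specifically, verifying that the sum over dyadic $N$ of the mass factors, combined with the mass-splitting $\sum_{N,Q}C_{N,Q}\le\|f_0\|_{\ell^1}$ from Lemma \ref{max_cs}, produces a constant that can be absorbed to give exactly $1/(\lambda A^2)$ rather than $C/(\lambda A^2)$, which is what the downstream counting argument will need.
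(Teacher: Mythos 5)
Your proposal is correct and takes essentially the same route as the paper's own one-line proof: Chebyshev's inequality for the measure bounds, and the fact that $E_{\{J\}}F_{A,N}$ (resp.\ $E_{\{J\}}F^{N}_{s,2}$) is constant on each $J$ for the all-or-nothing intersection property. One slip in your bookkeeping: $\|\mu_N\|_{\ell^1}\approx 1$, not $C/N$, so the summability in $N$ cannot come from $\sum_N \frac1N$; it comes from the disjointness in $N$ of the level sets $f_0^{\frac{\lambda N}{A}}$, i.e.\ exactly the bound $\sum_{N,Q}C_{N,Q}\le\|f_0\|_{\ell^1}$ that you also cite, which gives $\big\|\sum_N F_{A,N}\big\|_{\ell^1}\le C\|f_0\|_{\ell^1}$. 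The resulting constant $C$ (rather than exactly $1$) is harmless, since downstream the paper only uses $|\mathcal A_A|\le c/(\lambda A^2)$ and $|\mathcal A^s|\le c/(\lambda 2^{s\epsilon})$, so your concern about absorbing the constant is moot.
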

\begin{proof}
Immediate from
the Chebyshev inquality. The second part follows since $E_{\{J\}} F_{A,N}$ is constant on each $J$.
\end{proof}
By the above lemma \ref{jmax_est} we can consider only the intervals $J$ with $j_{max}^A\le CA^3$
and   $j_{max}^s\le 2^{3s\epsilon}$ for every $A,s$. This will allow
us to apply Menschov classical estimate of the maximal function by the sum of small number of the square functions.

\begin{lemma}\label{menshov}
Let $\{a_i\}_{1\le i \le D}$ be a sequence of numbers. Then
\begin{equation}\label{menshov_est}
\max_{1\le i\le D}|\sum_{1\le j \le i}a_j|^2\le \log D\sum_{k\le \log D} \sum_{s\le  D}|\sum_{2^k s\le j \le 2^k (s+1)}a_j|^2
\end{equation}
\end{lemma}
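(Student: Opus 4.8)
The plan is to run the classical Rademacher--Menshov chaining argument. First I would reduce to the case where $D=2^m$ is a power of $2$, by extending the sequence with zeros, $a_j=0$ for $D<j\le 2^m$, where $2^m$ is the least power of $2$ with $2^m\ge D$; then $m\le 1+\log D$ (with $\log=\log_2$, which is the convention forced by the stated constant, up to the usual harmless adjustment) and nothing is lost. For each scale $0\le k\le m$ and each $s\ge 0$ set
\begin{equation*}
T_{k,s}=\sum_{2^k s<j\le 2^k(s+1)}a_j ,
\end{equation*}
the block sum over the dyadic interval of length $2^k$ starting at $2^ks$; only the $T_{k,s}$ with $2^ks<D$ can be nonzero, so effectively $s\le D$ and $k\le\log D$.

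The combinatorial heart of the matter is that every initial segment $\{1,\dots,i\}$ with $i\le D$ is a \emph{disjoint} union of at most $m+1$ of these dyadic blocks, at most one per scale. Concretely, writing $i=\sum_{k:\,b_k=1}2^k$ in binary and peeling off the bits from the highest down, the portion adjoined at the step corresponding to a bit $b_k=1$ is exactly $\{p_k+1,\dots,p_k+2^k\}$, where $p_k=\sum_{k'>k:\,b_{k'}=1}2^{k'}$ is a multiple of $2^k$; hence this portion equals $\{j:2^ks_k<j\le 2^k(s_k+1)\}$ with $s_k=p_k/2^k<i/2^k\le D$, and therefore
\begin{equation*}
\sum_{1\le j\le i}a_j=\sum_{k:\,b_k=1}T_{k,s_k}.
\end{equation*}

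To finish I would apply the Cauchy--Schwarz inequality in the (at most $(m+1)$-element) set of scales that actually occur:
\begin{equation*}
\Big|\sum_{1\le j\le i}a_j\Big|^2\le(m+1)\sum_{k:\,b_k=1}|T_{k,s_k}|^2\le(m+1)\sum_{k\le m}\sum_{s}|T_{k,s}|^2 .
\end{equation*}
The right-hand side no longer depends on $i$, so taking the maximum over $1\le i\le D$ and using $m+1\le\log D$ (after absorbing the additive constant) gives \eqref{menshov_est}. There is no serious obstacle here --- this is the standard deterministic core of the Rademacher--Menshov maximal inequality; the only point that needs a little care is the bookkeeping of the binary peeling that produces the per-scale dyadic decomposition of $\{1,\dots,i\}$, together with the innocuous endpoint and padding conventions that reconcile the half-open blocks used above with the closed blocks $2^ks\le j\le 2^k(s+1)$ appearing in the statement.
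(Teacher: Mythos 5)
Your proposal is correct and is precisely the classical Rademacher--Menshov chaining argument (binary decomposition of the initial segment into at most one dyadic block per scale, then Cauchy--Schwarz in the scale variable); the paper does not spell out a proof at all but simply cites Kashin--Saakyan for exactly this standard fact, so you have supplied the intended argument. The only point of difference is the harmless bookkeeping of constants ($m+1$ versus $\log D$, closed versus half-open blocks), which you already flag and which is immaterial to how the lemma is used.
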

\begin{proof} This is a well known fact, see eg. \cite{KS}.
\end{proof}
For fixed $N$ we denote by $\mathcal J_N$ the family of dyadic intervals $I$ of size $8N^\alpha\le |I| < 16N^\alpha$.
Then, for $N$ fixed,  $\cup_{I\in\mathcal J_N}I=\mathbb Z$. Moreover, any two $I_1\in\mathcal J_{N_1}, I_2\in\mathcal J_{N_2}$ either have empty intersection, or one is a subset of the other. For given interval $I$ we denote by $I^*$ an interval concentric with $I$, with a larger diameter. The exact ratio of diameters depends on constants appearing in Lemma \ref{k_reg}, and will be obvious from the context.

\begin{lemma}\label{sqr_UZ}
 Let $b^{A,N}_{s,i}$ be defined as in \eqref{bsi_def}.  Fix $J$ and $i$.
 Let $I_1\in\mathcal J_{N_1},I_2\in\mathcal J_{N_2}$,  $J\subset I_{N_1}\cap I_{N_2}$.
 Then for any fixed increasing sequence of integers $\{S_j\}$ we have
\begin{align}\label{square_1}
&\sum_{y\in J}\sum_{j}|\sum_s\sum_{S_j< N\le S_{j+1}} \mu_N*b^{A,N}_{s,i}(y)|^2\phi_J(y)\le\notag\\
&\qquad\le\sum_{s_1,s_2} 2^{-\delta (s_1+s_2)} \sum_{N_2\le N_1}\frac{|J|}{( N_1N_2)^{\alpha}}
\|b^{A,N_1}_{s_1,i}\|_{\ell^1(I_1^*)}\|b^{A,N_2}_{s_2,i}\|_{\ell^1(I_2^*)}+\\
&\qquad\qquad+\sum_{N_1,s}\frac{|J|}{ N_1^{\alpha+1}}\sum_{x\in I_1^*}|b^{A,N_1}_{s,i}(x)|^2+\notag\\
&\qquad\qquad+\sum_{N_1,s_1,s_2}\langle\,|Err_{N_1,J}\,b^{A,N_1}_{s_1,i}|,| b^{A,N_1}_{s_2,i}|\,\rangle\notag\\
&\qquad=D_I(J)+D_{II}(J)+D_{III}(J)\notag
\end{align}
The terms $D_{II}(J),D_{III}(J)$ appears only if $N_1=N_2$. Moreover the RHS of \eqref{square_1} do not depend on the particular
choice of the sequence $S_j$.
\end{lemma}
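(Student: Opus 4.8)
The plan is to expand the square on the left-hand side, sum in $y$ against $\phi_J$, and recognize the resulting bilinear form in terms of the kernels $K_{N_1,N_2}$ of Lemma \ref{k_reg}. Concretely, since $\mu_N * b^{A,N}_{s,i}(y) = \sum_{x} \mu_N(y-x)\, b^{A,N}_{s,i}(x)$, after squaring and summing we obtain
\begin{equation*}
\sum_{j}\sum_{s_1,s_2}\sum_{\substack{S_j<N_1\le S_{j+1}\\ S_j<N_2\le S_{j+1}}}\ \sum_{x_1,x_2} K_{N_1,N_2}(x_1,x_2)\, b^{A,N_1}_{s_1,i}(x_1)\, b^{A,N_2}_{s_2,i}(x_2),
\end{equation*}
where $K_{N_1,N_2}(x_1,x_2)=\sum_{y\in J}\phi_J(y)\mu_{N_1}(y-x_1)\mu_{N_2}(y-x_2)$ is exactly \eqref{ker_def} (up to the harmless reflection $\mu_N\leftrightarrow\check\mu_N$, which does not affect any of the estimates). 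The first reduction is to drop the diagonal-in-$j$ constraint: because each pair $(N_1,N_2)$ with $N_1,N_2$ in the same block $(S_j,S_{j+1}]$ contributes at most once, bounding the absolute value of each term and summing freely over all $N_1,N_2$ only increases the quantity; this is precisely why the right-hand side of \eqref{square_1} does not depend on the sequence $\{S_j\}$.

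Next I would split the sum over $(N_1,N_2)$ according to whether $N_1\ne N_2$ or $N_1=N_2$, by symmetry restricting to $N_2\le N_1$ in the off-diagonal case. For $N_1\ne N_2$, statement 1 of Lemma \ref{k_reg} gives $|K_{N_1,N_2}(x_1,x_2)|\le C|J|/(N_1N_2)^\alpha$, and — crucially — statement 0 confines $x_1$ to an interval of length $\sim CN_1^\alpha$ around $x_J$ and $x_2$ to one of length $\sim CN_2^\alpha$ around $x_J$; since $J\subset I_{N_1}\cap I_{N_2}$ with $|I_{N_k}|\sim N_k^\alpha$, these supports lie inside $I_1^*$ and $I_2^*$ respectively for a suitable dilation constant. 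Hence the off-diagonal contribution is bounded by
\begin{equation*}
\sum_{s_1,s_2}\sum_{N_2\le N_1}\frac{C|J|}{(N_1N_2)^\alpha}\ \|b^{A,N_1}_{s_1,i}\|_{\ell^1(I_1^*)}\,\|b^{A,N_2}_{s_2,i}\|_{\ell^1(I_2^*)}.
\end{equation*}
To produce the gain $2^{-\delta(s_1+s_2)}$ I would \emph{not} use it here in the raw off-diagonal bound but rather absorb it from the mean-zero structure of $b^{A,N}_{s,i}$ on its constituent cubes $Q$ (of size $\sim (2^{-s}N)^\alpha$) combined with the Hölder regularity \eqref{ker_reg}, \eqref{ker_reg_r}: replacing $K_{N_1,N_2}(x_1,x_2)$ by $K_{N_1,N_2}(x_{Q_1},x_{Q_2})$ on each product cube costs, after using $\sum_{y\in Q}b^{A,N}_{s,i}(y)=0$, a factor $(|Q_k|/N_k^\alpha)^\delta\sim 2^{-\delta s_k}$. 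Summing these corrections over the cubes reproduces the $\ell^1(I_k^*)$ norms with the claimed decay, while the "main" piece with the frozen kernel telescopes to zero against the cancellation — so in fact the entire off-diagonal term inherits the factor $2^{-\delta(s_1+s_2)}$. This cancellation-plus-regularity step is the heart of the estimate and I expect it to be the main obstacle, since one must be careful that the cubes $Q\in\mathcal D^i_{A,N,s}$ appearing in $b^{A,N}_{s,i}$ really do have the size $\sim(2^{-s}N)^\alpha$ asserted in \eqref{D1_def}, and that the perturbation argument is organized so the errors, not the main terms, carry the $\ell^1$ norms.

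Finally, for $N_1=N_2=N$ I would invoke statement 2 of Lemma \ref{k_reg}: $K_{N,N}=\tilde K_{N,N}+Err_{N,J}+C_{N,J}\delta_0$. The $\tilde K_{N,N}$ piece satisfies the same bounds \eqref{ker_inf}, \eqref{ker_reg}, \eqref{ker_reg_r} as the off-diagonal kernels, so it feeds into $D_I(J)$ exactly as above (now with $N_1=N_2$). The term $C_{N,J}\delta_0(x_1-x_2)$ contributes $\sum_{N,s_1,s_2} C_{N,J}\sum_x b^{A,N}_{s_1,i}(x)\,b^{A,N}_{s_2,i}(x)$, and since $|C_{N,J}|\le C|J|/N^{1+\alpha}$ this is dominated — after Cauchy–Schwarz in $(s_1,s_2)$, or simply by noting the cross terms combine into $|\sum_{s}b^{A,N}_{s,i}(x)|^2$ — by $D_{II}(J)=\sum_{N,s}\frac{|J|}{N^{\alpha+1}}\sum_{x\in I_1^*}|b^{A,N}_{s,i}(x)|^2$. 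The $Err_{N,J}$ piece is put verbatim into $D_{III}(J)=\sum_{N,s_1,s_2}\langle |Err_{N,J}\,b^{A,N}_{s_1,i}|,|b^{A,N}_{s_2,i}|\rangle$, so no further work is needed for it at this stage. Collecting the three contributions gives \eqref{square_1}, and the independence from $\{S_j\}$ is built in by the very first reduction.
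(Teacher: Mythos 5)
Your proposal follows essentially the same route as the paper: expand the square into the bilinear forms $\langle K_{N_1,N_2}^J b^{A,N_1}_{s_1,i},b^{A,N_2}_{s_2,i}\rangle$, discard the blocking in $j$ by taking absolute values (which is exactly why the right-hand side is independent of $\{S_j\}$), use the size/support bounds of Lemma \ref{k_reg} together with the mean-zero property of the $b$'s to get $D_I$, and on the diagonal $N_1=N_2$ split $K_{N,N}=\tilde K_{N,N}+Err_{N,J}+C_{N,J}\delta_0$, sending $\tilde K$ into $D_I$, the $\delta_0$ term into $D_{II}$, and $Err_{N,J}$ verbatim into $D_{III}$.

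Two points need tightening. First, your derivation of the factor $2^{-\delta(s_1+s_2)}$ freezes the kernel at \emph{both} cube centers $x_{Q_1},x_{Q_2}$; to make the error carry the product $2^{-\delta s_1}2^{-\delta s_2}$ you would need a mixed second-difference (double H\"older) estimate for $K_{N_1,N_2}$, which Lemma \ref{k_reg} does not provide --- only the separate estimates \eqref{ker_reg} and \eqref{ker_reg_r} are available. The paper avoids this by exploiting the cancellation of only the function with the larger $s$ (say $s_1\ge s_2$), obtaining $\|K_{N_1,N_2}^Jb^{A,N_1}_{s_1,i}\|_{\ell^\infty}\le 2^{-\delta s_1}|J|(N_1N_2)^{-\alpha}\|b^{A,N_1}_{s_1,i}\|_{\ell^1(I_1^*)}$, which suffices because $2^{-\delta\max(s_1,s_2)}\le 2^{-\frac{\delta}{2}(s_1+s_2)}$; equivalently, take the geometric mean of the two one-variable bounds. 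Second, for the $\delta_0$ term your Cauchy--Schwarz alternative in $(s_1,s_2)$ would produce $(\sum_s\|b^{A,N}_{s,i}\|_{\ell^2})^2$ and hence logarithmic losses; the correct justification of your other alternative (the cross terms combining into $\sum_x|\sum_s b^{A,N}_{s,i}(x)|^2=\sum_s\sum_x|b^{A,N}_{s,i}(x)|^2$) is that, for fixed $A,N,i$, the functions $b^{A,N}_{s,i}$ for different $s$ live on disjoint families of Calder\'on--Zygmund cubes (different sizes), which is exactly the disjointness observation the paper uses. With these repairs your argument coincides with the paper's proof.
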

\begin{proof}
We expand the square as a double sum
\begin{equation}\label{square_1_01}
 \begin{aligned}
\sum_{y\in J}\sum_{j}|\sum_s\sum_{S_j< N\le S_{j+1}} \mu_N*b^{A,N}_{s,i}(y)|^2\phi_J(y)=\\
\sum_{s_1,s_2}\sum_j  \sum_{S_j<N_1, N_2\le S_{j+1}}
<K_{N_1,N_2}^J b^{A,N_1}_{s_1,i}, b^{A,N_2}_{s_2,i}>
\end{aligned}
\end{equation}
and  to each summand we apply the regularity estimate \eqref{ker_inf}, \eqref{ker_reg} of $K_{N_1,N_2}^J$, see Lemma \ref{k_reg}.

Let $s_1\ge s_2$. If $N_1\ne N_2$ then we apply \eqref{ker_reg} and the  standard cancellation argument (we omit the details).
This  leads to
\begin{equation}
\|K_{N_1,N_2}^J b^{A,N_1}_{s_1,i}\|_{\ell^\infty}\le
2^{-s_1\delta}|J|(N_1N_2)^{-\alpha}\|b^{A,N_1}_{s_1,i}\|_{\ell^1(I_1^*)}
\end{equation}

If $N_1= N_2$, we obtain the same estimate for $\tilde K$ instead of $K$.

If $s_1< s_2$ we repeat the  above argument to the kernels  conjugate to $K, \tilde K$, acting on
$b^{A,N_2}_{s_2,i}$.  The estimate of the first summand in \eqref{square_1} follows.

If $N_1=N_2$, $s_1\ne s_2$
then the supports of the functions $ b^{A,N_1}_{s_1,i}$,  $ b^{A,N_2}_{s_2,i}$ are disjoint.
Consequently, the $\delta_0$ term in \eqref{ker_reg_1}  produce the second $D_{II}(J)$ term in \eqref{square_1},
and the Lemma follows.
\end{proof}

\section{Proof of Theorem \ref{theorem}.}
We  now return to the proof of Theorem \ref{theorem}. Recall, that we have reduced the proof to estimating the 2 operators $H_i^*$ given by \eqref{3::1}, $i=1,2$. We proceed with the 2 cases.
\newline {\bf Case $i=1$.}  Recall the definition \eqref{bsi_def}:
\begin{equation*}
b^{A,N}_{s,1}(x)=\sum_{Q\in \mathcal D_{A,N,s}^1}
(1_{Q}(x)f_0^{\frac{\lambda N}{A}}(x)-E_{Q}f_0^{\frac{\lambda N}{A}}(x))
\end{equation*}
For a dyadic interval $J$ (recall that we only consider $J$'s such that $|J|\sim M^{\theta-\epsilon}$) we need to estimate
\begin{align*}
&\Big|\Big\{x\in J:\max_B\Big|\sum_{N\le B}\sum_{s\ge0}(\mu_N-\check\mu_N)*b^{A,N}_{s,1}(x)\Big|>\lambda A^{-\epsilon}\Big\}\Big| \\
&\qquad\qquad\le\Big|\Big\{x\in J:\max_B\Big|\sum_{N\le B}\sum_{s\ge0}\mu_N*b^{A,N}_{s,1}(x)\Big|>\frac12\lambda A^{-\epsilon}\Big\}\Big| +\\
&\qquad\qquad\quad+\Big|\Big\{x\in J:\max_B\Big|\sum_{N\le B}\sum_{s\ge0}\check\mu_N*b^{A,N}_{s,1}(x)\Big|>\frac12\lambda A^{-\epsilon}\Big\}\Big|.
\end{align*}
We call the first summand $L_J$. It is enough to estimate $L_J$, since the second summand can be estimated analogously. We apply Lemma \ref{max_cs} (with $\lambda_0=c\lambda A^{-\epsilon}$) and thus there exists a sequence $\{N_j\}_{j\le cA^3}$, depending on $J$ but independent of $x\in J$, the collections $\mathcal C_1, \mathcal C_2$ such that for some  $v\in \{1,2\}$ ($v=v(A,J)$
\begin{align*}
L_J&\le\Big|\Big\{x\in J:\max_{j\le cA^3}\Big|\sum_{N\le N_j, N\in C_v}\sum_{s\ge0}\mu_N*b^{A,N}_{s,1}(x)\Big|>\lambda A^{-\epsilon}\Big\}\Big|\\
&\qquad+|\{x\in J:ER(x)>c\lambda A^{-\epsilon}\}|+|\mathcal A_A\cap J|,
\end{align*}
(regarding the range of $j$'s see Lemma \ref{jmax_est} and remark that follows). Now apply Lemma \ref{menshov}, and obtain a $k\le c\log A$ such that if we put $S_j=N_{2^kj}$ we have

\begin{align*}
\sum_JL_J&\le\sum_J\Big|\Big\{x\in J:\sum_{j}\Big|\sum_{S_j<N\le S_{j+1}}\sum_{s\ge0}\mu_N*b^{A,N}_{s,1}(x)\Big|^2>\lambda^2 A^{-3\epsilon}\Big\}\Big|\\
&\qquad+|\{x:ER(x)>c\lambda A^{-\epsilon}\}|+|\mathcal A_A|.
\end{align*}
By \eqref{3::2} (and Chebychev's inequality), Lemmas \ref{max_cs} and \ref{jmax_est}, the two last summands have estimates
\begin{gather*}
|\{x:ER(x)>c\lambda A^{-\epsilon}\}|\le c\,\frac{\|f_0\|_{\ell^1}A^{\epsilon}}{\lambda N^\delta} \le c\,\frac{1}{\lambda A^{\epsilon'}},\\
|\mathcal A_A|\le c\,\frac{1}{\lambda A^2},
\end{gather*}
since $\epsilon$ can be chosen small enough and, as was pointed out before, only $A\le N$ are relevant.
We turn to the first summand. Applying Lemma \ref{sqr_UZ} and the Chebychev's inequality we get
\begin{align}\label{4::1}
&\sum_J\Big|\Big\{x\in J:\sum_{j}\Big|\sum_{\twoline{S_j<N\le S_{j+1}}{N\in\mathcal C_v}}\sum_{s\ge0}\mu_N*b^{A,N}_{s,1}(x)\Big|^2>\lambda^2 A^{-3\epsilon}\Big\}\Big|\notag\\
&\quad\le\sum_{\twoline{s_i:2^{s_i}\ge A^{}}{i=1,2}}\sum_J\sum_{N_1\ge N_2}2^{-\delta(s_1+s_2)}
\frac{A^{3\epsilon}|J|}{\lambda^2 (N_1N_2)^{\alpha}}\times\notag\\
&\qquad\qquad\qquad\times
\|b^{A,N_1}_{s_1,1}\|_{\ell^1(I_{N_1}^*(J))}\|b^{A,N_2}_{s_2,1}\|_{\ell^1(I_{N_2}^*(J))}+\notag\\
&\qquad+\sum_J\sum_{N}\sum_{s:2^s\ge A^{}}\frac{A^{3\epsilon}|J|}{\lambda^2 N^{\alpha+1}}\|b^{A,N}_{s,1}\|_{\ell^2(I_{N}^*(J))}^2\\
&\qquad+\sum_J\sum_{\twoline{s_i:2^{s_i}\ge A^{}}{i=1,2}}\sum_{N}\frac{A^{3\epsilon}}{\lambda^2}\big|\langle Err_{N,J}b^{A,N}_{s_1,1},b^{A,N}_{s_2,1}\rangle\big|\notag\\
&\quad=I+II+III,\notag
\end{align}
where $I_{N}(J)$ is the unique dyadic interval from the family $\mathcal J_N$ which contains $J$. Again, we start with the first component $I$. Note, that the sum of $|J|$ of those $J$'s, which share the same $I_{N_2}(J)$ is equal to $|I_{N_2}(J)|\sim N_2^\alpha$.
So,
\begin{multline*}
I\le\sum_{\twoline{s_i:2^{s_i}\ge A^{}}{i=1,2}}2^{-\delta(s_1+s_2)}\sum_{N_1\ge N_2}\sum_{I_{N_1}\subset\mathcal J_{N_1}}
\sum_{\twoline{I_{N_2}\subset\mathcal J_{N_2}}{I_{N_2}\subset I_{N_1}}}\frac{A^{3\epsilon}}{\lambda^2 N_1^{\alpha}}\times\\
\times\|b^{A,N_1}_{s_1,1}\|_{\ell^1(I_{N_1}^*)}\|b^{A,N_2}_{s_2,1}\|_{\ell^1(I_{N_2}^*)}.
\end{multline*}
Further, for fixed $N_1$ and $I_{N_1}\in\mathcal J_{N_1}$ observe
\begin{align}\label{4::2}
&\sum_{N_2\le N_1}\sum_{\twoline{I\subset\mathcal J_{N_2}}{I\subset I_{N_1}}}\|b^{A,N_2}_{s_2,1}\|_{\ell^1(I^*)}=\notag\\
&\qquad=\sum_{N_2\le N_1}\sum_{\twoline{I\subset\mathcal J_{N_2}}{I\subset I_{N_1}}}
\sum_{x\in I^*}\Big|\sum_{Q\in\mathcal D_{A,N_2,s_2}^1}
\Big(1_Qf_0^{\frac{\lambda N_2}{A}}(x)-E_{Q}f_0^{\frac{\lambda N_2}{A}}(x)\Big)\Big|\notag\\
&\qquad\le 2\sum_{N2\le N_1}\sum_{\twoline{I\subset\mathcal J_{N_2}}{I\subset I_{N_1}}}
\sum_{x\in I^*}\sum_{Q\in\mathcal D_{A,N_2,s_2}^1}
1_Qf_0^{\frac{\lambda N_2}{A}}(x)\\
&\qquad\le c\sum_{\twoline{Q\in\mathcal D_{A,N_2,s_2}^1}{Q\subset I_{N_1}^*}}\sum_{x\in Q}1_Qf_0^{\frac{\lambda N_2}{A}}(x).\notag
\end{align}
We have used the fact, that scales of cubes in $\mathcal D_{A,N_2,s_2}^1$ are all smaller than $N_2^\alpha$, and also that for fixed $A,s_2$ the cubes in $\mathcal D_{A,N_2,s_2}^1$ are disjoint.
 Consequently, again using disjointness and \eqref{CZ_decom}
\begin{align}\label{4::3}
&\sum_{N_2\le N_1}\sum_{\twoline{Q\in\mathcal D_{A,N_2,s_2}^1}{Q\subset I_{N_1}^*}}\sum_{x\in Q}1_Qf_0^{\frac{\lambda N_2}{A}}(x)\le\\
&\qquad\qquad\le C\sum_{Q\subset I_{N_1}^*}\sum_{x\in Q}\Big(\sum_{N_2\le N_1} 1_{Q\in\mathcal D_{A,N_2,s_2}^1}(x)\,f_0^{\frac{\lambda N_2}{A}}(x)\Big)\notag\\
&\qquad\qquad\le C\sum_{Q\subset I_{N_1}^*}\sum_{x\in Q}\Big(\sum_{N_2\le N_1}f_0^{\frac{\lambda N_2}{A}}(x)\Big)\notag\\
&\qquad\qquad\le C\sum_{Q\subset I_{N_1}^*}\sum_{x\in Q}f_0(x)\notag\\
&\qquad\qquad\le C\sum_{Q\subset I^*_{N_1}}\lambda|Q|\notag\\
&\qquad\qquad\le C\lambda|I_{N_1}|\notag\\
&\qquad\qquad\le C\lambda N_1^\alpha \notag
\end{align}
Thus,
\begin{align*}
I&\le c\sum_{\twoline{s_i:2^{s_i}\ge A^{}}{i=1,2}}2^{-\delta(s_1+s_2)}\sum_{N_1}\sum_{I\in\mathcal J_{N_1}}
\frac{A^{3\epsilon}}{\lambda}\|b^{A,N_1}_{s_1,1}\|_{\ell^1(I^*)}\\
&\le c\sum_{N_1}\sum_{I\in\mathcal J_{N_1}}\frac{A^{3\epsilon}}{\lambda A^{{\delta}{}}}\|f_0^{\frac{\lambda N_1}{A}}\|_{\ell^1(I^*)}\\
&\le \frac{c}{\lambda A^{\frac{\delta}{2}}},
\end{align*}
using disjointness of the supports of $f_0^{\frac{\lambda N_1}{A}}$.

We now turn to $II$ in \eqref{4::1}, and proceed similarly.
\begin{align*}
II&=\sum_J\sum_{N}\sum_{s:2^s\ge A^{}}\frac{A^{3\epsilon}|J|}{\lambda^2 N^{\alpha+1}}\|b^{A,N}_{s,1}\|_{\ell^2(I_{N}^*(J))}^2\\
&\le c\sum_J\sum_{N}\sum_{s:2^s\ge A^{}}\frac{A^{3\epsilon}|J|}{\lambda^2 N^{\alpha+1}}\|f^{A,N}_{s,1}\|_{\ell^2(I_{N}^*(J))}^2\\
&= c\sum_J\sum_{N}\sum_{s:2^s\ge A^{}}\frac{A^{3\epsilon}|J|}{\lambda^2 N^{\alpha+1}}\sum_{\twoline{Q\subset I_{N}^*(J)}{|Q|\sim(2^{-s}N)^\alpha}}\|(f^{A,N}_{s,1})^2\|_{\ell^1(Q)}\\
&\le c\sum_J\sum_{N}\frac{A^{3\epsilon}|J|}{\lambda^2 N^{\alpha+1}}\sum_{Q\subset I_{N}^*(J)}\|(f^{A,N}_{s,1})^2\|_{\ell^1(Q)}\\
&\le c\sum_{N}\sum_{I_N}\frac{A^{3\epsilon}}{\lambda^2 N^{\alpha+1}}\sum_{Q\subset I_N^*}\|(f^{A,N}_{s,1})^2\|_{\ell^1(Q)}\sum_{J\subset I_N^*}|J|\\
&\le c\sum_{N}\frac{A^{3\epsilon}}{\lambda^2 N}\sum_{Q}\|(f^{A,N}_{s,1})^2\|_{\ell^1(Q)}\\
&\le c\sum_{N}\frac{A^{3\epsilon}}{\lambda A}\sum_{Q}\|f^{A,N}_{s,1}\|_{\ell^1(Q)}\\
&=c\frac{A^{3\epsilon}}{\lambda A}\sum_{N}\|f^{A,N}_{s,1}\|_{\ell^1}\\
&\le c\frac{\|f_0\|_{\ell^1}}{\lambda A^{1-3\epsilon}}\\
&=c\frac{1}{\lambda A^{1-3\epsilon}}.
\end{align*}
We finally consider the last summand in \eqref{4::1}. We use the following two properties of the kernel $Err_{N,J}$ (Lemma \ref{k_reg}): its support (which is within $c N^\alpha$ from the center of $J$ in both variables and, for any $x,y$, and also in the strip $|x-y|\le CM$)
\begin{equation*}
\sum_J|Err_{N,J}(x,y)|\le \frac{c}{N^\alpha}.
\end{equation*}
which, by \eqref{CZ_decom}, leads to
\begin{equation*}
\sum_J\sum_{y}|Err_{N,J}(x,y)||b^{A,N}_{s_2,1}(y)|\le \frac{c\lambda (N+(2^{-s}N)^\alpha)}{N^\alpha}.
\end{equation*}

We have
\begin{align*}
III&=\sum_J\sum_{\twoline{s_i:2^{s_i}\ge A^{}}{i=1,2}}\sum_{N}\frac{A^{3\epsilon}}{\lambda^2}\big|\langle Err_{N,J}b^{A,N}_{s_1,1},b^{A,N}_{s_2,1}\rangle\big|\\
&\le C\,\frac{A^{3\epsilon}}{\lambda^2}\sum_{\twoline{s_i:2^{s_i}\ge A^{}}{i=1,2}}\sum_{N}\sum_J\sum_{x,y\in I^*_N(J)} |Err_{N,J}(x,y)|\,|b^{A,N}_{s_1,1}(x)|\,|b^{A,N}_{s_2,1}(y)|\\
&\le C\,\frac{A^{3\epsilon}}{\lambda^2}\sum_{\twoline{s_i:2^{s_i}\ge A^{},s_1\ge s_2}{i=1,2}}\sum_{N}
\sum_{I\in\mathcal J_N}\frac{\lambda\,(N+(2^{-{s_1}}N)^\alpha)}{N^\alpha}
\|b^{A,N}_{s_2,1}\|_{\ell^1(I^*)}\\
&\le \frac{C\|f_0\|_{\ell^1}}{\lambda}.
\end{align*}

\noindent{\bf Case $i=2$.} Recall \eqref{D1_def}
\begin{equation*}
   \mathcal D_{A,N,s}^2=\{Q: |Q|\sim(2^{-s}N)^\alpha,\,2^s\le A\},
\end{equation*}
and \eqref{bsi_def}
\begin{equation*}
b^{A,N}_{s,2}(x)=\sum_{Q\in \mathcal D_{A,N,s}^2}
\Big(1_{Q}(x)f^{\frac{\lambda N}{A}}(x)-E_{Q}f^{\frac{\lambda N}{A}}(x)\Big).
\end{equation*}
We have
\begin{multline*}
  \max_B\Big|\sum_s\sum_{\twoline{A}{A^{}\ge 2^s}}\sum_{N\le B}\mu_N*b_{s,2}^{A,N}(x)\Big|^2\le\\
 \le  c_\epsilon\sum_s2^{\epsilon s}\max_B\Big|\sum_{A:\,A^{}\ge 2^{s}}\sum_{N\le B}\mu_N*b_{s,2}^{A,N}(x)\Big|^2.
\end{multline*}
Hence, for some $s$, we must have
\begin{equation*}
  \max_B\Big|\sum_{N\le B}\mu_N*b_{s,2}^{N}(x)\Big|^2>2^{-2\epsilon s}\lambda_0^2,
\end{equation*}
where
\begin{equation*}
  b_{s,2}^N=\sum_{A:\,A^{}\ge 2^{s}}b_{s,2}^{A,N}.
\end{equation*}
Now, apply lemmas \ref{max_sparse_est}, \ref{max_cs} \ref{jmax_est} exactly as in the case $i=1$. The Theorem follows. \qed

\noindent{\bf Remark.} If $|Q|\le M^{\theta-2\epsilon}$ then   $A\ge M^{\epsilon}$ by Lemma \ref{key_CZ}.
By Lemma \ref{sqr_UZ} or by  \cite{UZ} we have
\begin{equation}
\sum_N\sum_{I_N} |\mu_N*b^{A,N}_{Q}(x)|^2\le \frac{C\lambda}{M^\frac{\delta\epsilon}2}
\end{equation}
This yields the desired maximal estimate since we have at most $C\log M$ summands. We omit the details.

\begin{proof}[proof of Corollary \ref{corollary}]
Suppose $E\subset\zet$ is finite, and $f=\frac{{\mathbf{1}}_{E}}{|E|}$. Fix $\lambda>0$, and let $N\in\en$ be such that
\begin{equation*}
  \lambda N\le \frac{1}{|E|}\le2\lambda N,\qquad \text{where }f(x)=\frac{1}{|E|}\neq0,\quad\text{for $x\in E$}.
\end{equation*}
We thus have $\lambda N\sim1/|E|$. We decompose the maximal Hilbert transform:
\begin{align*}
  \ha^* f&\le\max_{A}\Big|\sum_{A\le M<N^\theta}\mu_M*f\Big|+\max_{A}\Big|\sum_{\twoline{N^\theta \le M<N^D}{A\le M}} \mu_M*f\Big|\\
        &\qquad+\max_{A\ge N^D}\Big|\sum_{A\le M}\mu_M*f\Big| \\
  &= I+II+III,
\end{align*}
where $D$ is an appropriate constant to be determined later.For fixed $A,\ N$ the summation index sets can be empty We estimate the superlevel sets of each of the components.
\newline
I: We have
\begin{equation*}
  \left|\bigcup_{M<N^\theta}\rm{supp}\, (\mu_M*f)\right|\le CN^\theta|E|\le C \frac{N^\theta}{\lambda N}= C\frac{N^{\theta-1}}{\lambda}\le C\frac{\|f\|_{\ell^1}}{\lambda},
\end{equation*}
(recall, that $\theta<1$).
\newline
II: This case is a consequence of the main Theorem \ref{theorem}
\newline
III:  For a fixed dyadic $M$ we further decompose:
\begin{equation*}
  \mu_M = \mu_M*\varphi_{M^\rho}+(\mu_M-\mu_M*\varphi_{M^\rho})=K_M+T_M,
\end{equation*}
where $\rho>\alpha(\alpha-1)<\rho<\alpha$ is arbitrary and fixed. From Lemma \ref{k_reg} we know, that
\begin{equation*}
  \|T_M\|_{\ell^2\to\ell^2}\le M^{-\beta},
\end{equation*}
where $\beta>0$ depends only on $\rho$ and $\alpha$. Thus
\begin{equation*}
  \bigg\|\sum_{N^D\le A\le M}T_M\bigg\|_{\ell^2\to\ell^2}\le C A^{-\beta}\le CN^{-\beta D},
\end{equation*}
and so
\begin{align*}
&\bigg|\Big\{x:\max_{A\ge N^D}\big|\sum_{A\le M}T_M*f\big|\ge\lambda\Big\}\bigg|\le\\
  &\qquad\le\sum_{\twoline{A-\text{dyadic}}{A\ge N^D}}\bigg|\Big\{x:\sum_{A\le M}T_M*f>\lambda\Big\}\bigg|\\
  &\qquad\le\sum_{\twoline{A-\text{dyadic}}{A\ge N^D}}\frac{1}{\lambda^2}\cdot\frac{1}{A^{2\beta}}\cdot\|f\|_{\ell^2}^2\\
  &\qquad\le \sum_{\twoline{A-\text{dyadic}}{A\ge N^D}}\frac{1}{\lambda^2 \cdot A^{2\beta}}\cdot 2\lambda N\cdot\|f\|_{\ell^1}\\
  &\qquad=\frac{1}{\lambda}\cdot N^{1-2\beta D}\cdot\|f\|_{\ell^1}\\
  &\qquad\le\frac{1}{\lambda}\|f\|_{\ell^2},
\end{align*}
where we specify $D$ so that $2\beta D\ge1$. We are left with an estimate of
\begin{equation*}
  \max_{A\ge N^D}\Bigg|\sum_{A\le M} K_M*f\Bigg|.
\end{equation*}
Using Lemma 3.5 from \cite{PZ1}
 this is a (maximal) Calderon-Zygmund operator, and in particular of weak type $(1,1)$.
\end{proof}

\end{document}